\title{On Accelerating the Regularized Alternating Least Square Algorithm for Tensors \thanks{This work was
supported by National Natural Science Foundation of China (Grants
No. 11401092), China Scholarship Council (Grants No.201406625025)}}
\author{Xiaofei Wang\thanks{School of Mathematics and Statistics, Northeast Normal University, Renmin Street 5268, Changchun, China
(\email{wangxf341@nenu.edu.cn})}\and
Carmeliza Navasca\thanks{Department of Mathematics, University of Alabama at Birmingham, 1300 University Boulevard, Birmingham, AL, USA
(\email{cnavasca@uab.edu})}\and
Stefan Kindermann\thanks{Industrial Mathematics Institute, Johannes Kepler Universitat Linz, Altenbergerstrasse 69, A-4040 Linz, Austria (kindermann@indmath.uni-linz.ac.at)}
}
\begin{document}
\maketitle
\slugger{simax}{simax}{xx}{x}{x--x}

\begin{abstract}
In this paper, we discuss the acceleration of
the regularized alternating least square (RALS) algorithm
for tensor approximation. We propose a fast iterative method using a Aitken-Stefensen like updates for the regularized algorithm.
Through numerical experiments, the fast algorithm demonstrate a faster convergence rate for the accelerated version  in comparison to both the standard and regularized alternating least squares algorithms. In addition, we analyze the global convergence based on
the Kurdyka-{\L}ojasiewicz inequality as well as show that the RALS algorithm has a linear local convergence rate.

\end{abstract}

\begin{keywords}
alternating least square, Kurdyka-{\L}ojasiewicz inequality, tensor approximation
\end{keywords}

\begin{AMS}
15A69, 65F30
\end{AMS}

\pagestyle{myheadings}
\thispagestyle{plain}

\section{Introduction}
Given a third order tensor $\mathcal{T}\in\mathbb{R}^{I\times J\times K}$,
we want to find the best approximation of $\mathcal{T}$ with $r$ rank-one components.
This tensor approximation can be posed as an optimization problem:\\
\hspace*{3.5cm}\text{minimize}\ \ $\frac{1}{2}\|\mathcal{T}-\sum\limits_{s=1}^r\mathbf{a}_s\circ\mathbf{b}_s\circ\mathbf{c}_s\|_F^2$\\
\hspace*{3.5cm}\text{subject to}\ \ $\mathbf{a}_s\in\mathbb{R}^I,\mathbf{b}_s\in\mathbb{R}^J,\mathbf{c}_s\in\mathbb{R}^K,~s=1,\cdots,r$\\
where $\mathbf{a}_s \circ\mathbf{b}_s \circ\mathbf{c}_s$ is a rank-one tensor generated by taking the outer products of three vectors, $\mathbf{a}_s, \mathbf{b}_s$ and $\mathbf{c}_s$.
A global minimizer of this objective function,
$\frac{1}{2}\|\mathcal{T}-\sum\limits_{s=1}^r\mathbf{a}_s\circ\mathbf{b}_s\circ\mathbf{c}_s\|_F^2$,
may not exist due to the ill-posedness \cite{silva,lim}
of low rank approximation, but developing algorithms to detect local minimizers or critical points of the objective function is
important for both theoretical research and practical application of tensor computations \cite{kolda}.

The conventional method, the alternating least-squares (ALS) algorithm \cite{carroll,harshman}, which was proposed 45 years ago remains the workhorse for
computing tensor approximations and decompositions. It is based on iteratively solving least-squares subproblems of the original nonlinear least-square objective functional using the Gauss-Seidel updating scheme. The subproblems are obtained through matricizing the given tensor and the rank-one tensor components. Under an assumption on the Hessian of the objective function, it has shown in \cite{uschmajew} that the ALS algorithm has a linear local convergence rate.
Despite the success of the ALS algorithm, it has some shortcomings \cite{comon,paatero}. The
non-uniqueness of the solution within the inner iterations of the ALS can substantially decrease the convergence rate.
This non-uniqueness can be avoided by introducing a
Tikhonov regularized term to the objective function \cite{lim,paatero}. However, this new update mechanism with a Tikhonov regularized term cannot guarantee
that the local minimizer is also a fixed point of the ALS update operator.
Another regularization \cite{navasca,li} was proposed to handle
the ALS algorithm by introducing a proximal term
into every subproblem instead of directly into the objective function.
This regularized version of the ALS algorithm is called the regularized alternating least-squares (RALS) algorithm.
It was shown in \cite{li} that
any limit point of every convergent subsequence from the RALS algorithm is
a critical point of the objective function.

Both of the ALS and RALS algorithms update one block of variables at each iteration while fixing other blocks.
Thus, these two algorithms can be considered under the framework of
several alternating block minimization techniques \cite{attouch,attouch1,xu}.
The Kurdyka-{\L}ojasiewicz inequality \cite{lojasiwicz} was the essential tool to show the global convergence of the ALS.
Attouch et. al. \cite{attouch,attouch1} study the convergence properties of alternating proximal
minimization algorithms for nonconvex structured functions. In \cite{xu},
Xu and Yin develop the block coordinate descent method with the Gauss-Seidel updating sweep for block multi-convex functions with applications to nonnegative tensor factorization and tensor completion. Instead of updating all the blocks in each loop as in  \cite{attouch,attouch1,xu}, an alternative approach is the maximum block improvement (MBI) method \cite{chen} which only update \emph{the maximally
improving} block per loop. In \cite{liz},  MBI was shown to handle tensor optimization models with spherical constraints.
Under some mild assumptions, Li et. al. \cite{liz} show that MBI has a global convergence and a linear local convergent rate.
Here we consider the convergence properties of regularized alternating least square (RALS) in the case the regularization parameter is static.
We show the global convergence of the RALS algorithm under the framework of
proximal alternating minimization \cite{attouch,attouch1}. The rate of this global convergence depends on the exponent of
the Kurdyka-{\L}ojasiewicz (KL) inequality. We show that the global convergence rate is either linear or sub-linear, but to further discern between the rates relies on a priori knowledge on the exponent of the KL inequality. In the appendix, we discuss the local theory of convergence of RALS, namely, when the sequence is close enough to a local minimizer, the RALS algorithm has a linear local convergence rate.

In this paper, we propose a new acceleration version of RALS by extending the Aitken-Stefensen acceleration formula in a matrix form. The corresponding numerical simulation results illustrate the effectiveness of our acceleration method. In addition, the new fast method outperforms
ALS and RALS with the Nesterov \cite{nesterov} accelerated updates.

%

This paper is organized as follows.
In Section 2, we introduce some notations and terminologies on the RALS algorithm for tensor approximation.
 In Section 3, we propose an acceleration version of algorithm.
The simulation experiment is shown in Section 4. In Section 5, we discuss the global convergence rates of the algorithm.
Finally, in Section 6 we summarize our conclusions and show some remaining problems of this work.

\section{The RALS algorithm for tensor approximation}
We focus on third-order tensors
$\mathcal{T}=(t_{ijk})\in\mathbb{R}^{I\times J\times K}$ with
three indices $1\leq i\leq I,1\leq j\leq J$ and $1\leq k\leq K$,
but all the methods proposed here can be applied to tensors of arbitrary $d$-th order. A third-order tensor $\mathcal{T}$ has column, row and tube fibers,
which are defined by fixing every index but one and
denoted by $\mathbf{t}_{:jk}$, $\mathbf{t}_{i:k}$ and $\mathbf{t}_{ij:}$ respectively.
Correspondingly, we obtain three matricizations of $\mathcal{T}$:
\begin{equation}
\begin{aligned}
\mathbf{T}_{(1)}=[\mathbf{t}_{:11},\cdots,\mathbf{t}_{:J1},\mathbf{t}_{:12},\cdots,\mathbf{t}_{:J2},\cdots,\mathbf{t}_{:1K},\cdots,\mathbf{t}_{:JK}],\\
\mathbf{T}_{(2)}=[\mathbf{t}_{1:1},\cdots,\mathbf{t}_{I:1},\mathbf{t}_{1:2},\cdots,\mathbf{t}_{I:2},\cdots,\mathbf{t}_{1:K},\cdots,\mathbf{t}_{I:K}],\\
\mathbf{T}_{(3)}=[\mathbf{t}_{11:},\cdots,\mathbf{t}_{I1:},\mathbf{t}_{12:},\cdots,\mathbf{t}_{I2:},\cdots,\mathbf{t}_{1J:},\cdots,\mathbf{t}_{IJ:}].\\
\end{aligned}
\end{equation}

The outer product $\mathbf{a}\circ\mathbf{b}\circ\mathbf{c}\in\mathbb{R}^{I\times J\times K}$ of three nonzero vectors
$\mathbf{a}\in\mathbb{R}^I, \mathbf{b}\in\mathbb{R}^J$ and $\mathbf{c}\in\mathbb{R}^K$ is
called a rank-one tensor with elements $a_ib_jc_k$ for all the indices.
A canonical polyadic (CP) decomposition of $\mathcal{T}\in\mathbb{R}^{I\times J\times K}$ expresses $\mathcal{T}$ as a sum of rank-one outer products:
\begin{equation}\label{cpd}
\mathcal{T}=\sum_{s=1}^{r} \mathbf{a}_s\circ\mathbf{b}_s\circ\mathbf{c}_s
\end{equation}
where $\mathbf{a}_s\in\mathbb{R}^I,\mathbf{b}_s\in\mathbb{R}^J,\mathbf{c}_s\in\mathbb{R}^K$ for $1\leq s\leq r$.
Every outer product $\mathbf{a}_s\circ\mathbf{b}_s\circ\mathbf{c}_s$ is a rank-one component. The positive integer $r$ is number of rank-one component number of tensor $\mathcal{T}$.

The Khatri-Rao product of two matrices $\mathbf{A}\in\mathbb{R}^{I\times r}$ and $\mathbf{B}\in\mathbb{R}^{J\times r}$
is defined as
$$\mathbf{A\odot B}=(\mathbf{a}_1\otimes\mathbf{b}_1,\cdots,\mathbf{a}_R\otimes\mathbf{b}_R)\in\mathbb{R}^{IJ\times r},$$
where the symbol ``$\mathbf{\otimes}$" denotes the Kronecker product:
$$\mathbf{a\otimes b}=(a_1b_1,\cdots,a_1b_J,\cdots,a_Ib_1,\cdots,a_Ib_J)^T.$$
Using this Khatri-Rao product, the CP decomposition (\ref{cpd}) can be written in three matricization forms of tensor $\mathcal{T}$:
\begin{equation}
\mathbf{T}_{(1)}=\mathbf{A}(\mathbf{C\odot B})^T, \mathbf{T}_{(2)}=\mathbf{B}(\mathbf{C\odot A})^T, \mathbf{T}_{(3)}=\mathbf{C}(\mathbf{B\odot A})^T
\end{equation}
where $\mathbf{A}=(\mathbf{a}_1,\cdots,\mathbf{a}_r)\in\mathbb{R}^{I\times r},\mathbf{B}=(\mathbf{b}_1,\cdots,\mathbf{b}_r)\in\mathbb{R}^{J\times r}$ and $\mathbf{C}=(\mathbf{c}_1,\cdots,\mathbf{c}_r)\in\mathbb{R}^{K\times r}$
are called the factor matrices of tensor $\mathcal{T}$.

Let $\mathscr{X}=\mathbb{R}^{I\times r}\times\mathbb{R}^{J\times r}\times\mathbb{R}^{K\times r}$ where $r$ is any given positive integer, the elements of $\mathscr{X}$ is denoted by $\mathbf{x}=(\mathbf{A},\mathbf{B},\mathbf{C})$, where $\mathbf{A}\in\mathbb{R}^{I\times r},\mathbf{B}\in\mathbb{R}^{J\times r},\mathbf{C}\in\mathbb{R}^{K\times r}$.
Note that $\mathbf{x}$ can be also viewed as a vector in $\mathbb{R}^{r(I+J+K)}$.
Given a tensor $\mathcal{T}\in\mathbb{R}^{I\times J\times K}$,
we consider its approximation by using the sum of $r$ rank-one components
$\sum\limits_{s=1}^r\mathbf{a}_s\circ\mathbf{b}_s\circ\mathbf{c}_s$,
and define a residual function $f:\mathscr{X}\rightarrow\mathbb{R}$ by
\begin{equation}
f(\mathbf{x})=f(\mathbf{A},\mathbf{B},\mathbf{C})\rightarrow
\frac{1}{2}\|\mathcal{T}-\sum\limits_{s=1}^r\mathbf{a}_s\circ\mathbf{b}_s\circ\mathbf{c}_s\|_F^2,
\end{equation}
where vectors $\mathbf{a}_s,\mathbf{b}_s,\mathbf{c}_s$ are columns of $\mathbf{A,B}$ and $\mathbf{C}$ respectively,
and $\|\cdot\|_F$ is the tensor Frobenius norm.
There may exist a local minimizer $\mathbf{x}^*=(\mathbf{A}^*,\mathbf{B}^*,\mathbf{C}^*)$
of $f(\mathbf{A},\mathbf{B},\mathbf{C})$, which is also a critical point of $f(\mathbf{x})$ such that $\nabla f(\mathbf{x}^*)=0$
since $f$ is a polynomial function.
Denote $\sum\limits_{s=1}^r\mathbf{a}^*_s\otimes\mathbf{b}^*_s\otimes\mathbf{c}^*_s$
as an optimal approximation of tensor $\mathcal{T}$ with rank at most $r$,
where vectors $\mathbf{a}_s^*,\mathbf{b}_s^*,\mathbf{c}_s^*$ are columns of $\mathbf{A}^*,\mathbf{B}^*$ and $\mathbf{C}^*$ respectively.

The approximation of a given tensor is implemented by the alternating least squares (ALS) algorithm.
Given a starting point $\mathbf{x}^{(0)}=(\mathbf{A}^{(0)},\mathbf{B}^{(0)},\mathbf{C}^{(0)})$,
we solve three subproblems iteratively:

\begin{equation}
\label{ALS}
\begin{split}
\mathbf{A}^{(n+1)}&=\mathop{\arg\min}\limits_{\mathbf{A}\in\mathbb{R}^{I\times r}}f(\mathbf{A},\mathbf{B}^{(n)},\mathbf{C}^{(n)})
=\mathop{\arg\min}\limits_{\mathbf{A}\in\mathbb{R}^{I\times r}}\frac{1}{2}\|\mathbf{T}_{(1)}-\mathbf{A}(\mathbf{C}^{(n)}\odot{\mathbf{B}^{(n)}}^T)\|_F^2,\\
\mathbf{B}^{(n+1)}&=\mathop{\arg\min}\limits_{\mathbf{B}\in\mathbb{R}^{J\times r}}f(\mathbf{A}^{(n+1)},\mathbf{B},\mathbf{C}^{(n)})
=\mathop{\arg\min}\limits_{\mathbf{B}\in\mathbb{R}^{J\times r}}\frac{1}{2}\|\mathbf{T}_{(2)}-\mathbf{B}(\mathbf{C}^{(n)}\odot{\mathbf{A}^{(n+1)}}^T)\|_F^2,\\
\mathbf{C}^{(n+1)}&=\mathop{\arg\min}\limits_{\mathbf{C}\in\mathbb{R}^{K\times r}}f(\mathbf{A}^{(n+1)},\mathbf{B}^{(n+1)},\mathbf{C})
=\mathop{\arg\min}\limits_{\mathbf{C}\in\mathbb{R}^{K\times r}}\frac{1}{2}\|\mathbf{T}_{(3)}-\mathbf{C}(\mathbf{B}^{(n+1)}\odot{\mathbf{A}^{(n+1)}}^T)\|_F^2.
\end{split}
\end{equation}

If every optimization problem possesses a unique solution, then one loop of (\ref{ALS})
defines an operator $S_{ALS}(\cdot)$ \cite{uschmajew} via
\begin{equation}\label{SALS}
(\mathbf{A}^{(n+1)},\mathbf{B}^{(n+1)},\mathbf{C}^{(n+1)})=\mathbf{x}^{(n+1)}
=S_{ALS}(\mathbf{x}^{(n)})=S_{ALS}(\mathbf{A}^{(n)},\mathbf{B}^{(n)},\mathbf{C}^{(n)}),
\end{equation}
where three matrices
\begin{equation}
\label{ESALS}
\begin{split}
\mathbf{A}^{(n+1)}&=(\mathbf{T}_{(1)}(\mathbf{C}^{(n)}\odot\mathbf{B}^{(n)}))((\mathbf{C}^{(n)}\odot\mathbf{B}^{(n)})^T(\mathbf{C}^{(n)}\odot\mathbf{B}^{(n)}))^{-1},\\
\mathbf{B}^{(n+1)}&=(\mathbf{T}_{(2)}(\mathbf{C}^{(n)}\odot\mathbf{A}^{(n+1)}))((\mathbf{C}^{(n)}\odot\mathbf{A}^{(n+1)})^T(\mathbf{C}^{(n)}\odot\mathbf{A}^{(n+1)}))^{-1},\\
\mathbf{C}^{(n+1)}&=(\mathbf{T}_{(3)}(\mathbf{B}^{(n+1)}\odot\mathbf{A}^{(n+1)}))((\mathbf{B}^{(n+1)}\odot\mathbf{A}^{(n+1)})^T(\mathbf{B}^{(n+1)}\odot\mathbf{A}^{(n+1)}))^{-1}
\end{split}
\end{equation}
are the least square solutions of (\ref{ALS}). Note that the inversion in (\ref{ESALS}) may not exist due to collinearity of the columns in the factor matrices, thus, we consider the generalized Moore-Penrose inverse in this case.

Since the computations in steps (\ref{ALS}) may not give a unique solution,
an extra regularized term (\cite{li,navasca}) is added in every step for eliminating the possibility of a non-uniqueness solution.
This regularized ALS algorithm (RALS) is shown as follows:
\begin{eqnarray}\label{RALS}
\mathbf{A}^{(n+1)}&=&\mathop{\arg\min}\limits_{\mathbf{A}\in\mathbb{R}^{I\times r}}f(\mathbf{A},\mathbf{B}^{(n)},\mathbf{C}^{(n)})+\frac{1}{2}\lambda\|\mathbf{A}-\mathbf{A}^{(n)}\|_F^2,\nonumber \\
\mathbf{B}^{(n+1)}&=&\mathop{\arg\min}\limits_{\mathbf{B}\in\mathbb{R}^{J\times r}}f(\mathbf{A}^{(n+1)},\mathbf{B},\mathbf{C}^{(n)})+\frac{1}{2}\lambda\|\mathbf{B}-\mathbf{B}^{(n)}\|_F^2,\\
\mathbf{C}^{(n+1)}&=&\mathop{\arg\min}\limits_{\mathbf{C}\in\mathbb{R}^{K\times r}}f(\mathbf{A}^{(n+1)},\mathbf{B}^{(n+1)},\mathbf{C})+\frac{1}{2}\lambda\|\mathbf{C}-\mathbf{C}^{(n)}\|_F^2,\nonumber
\end{eqnarray}
where $\lambda>0$ is a regularization parameter.
Our work is based on this RALS model and
addresses the case when the regularization parameter $\lambda$ is static.
It is easy to check that every subproblem in (\ref{RALS}) must have a
unique solution because of the strict convexity of the subproblem.
We denote the update of (\ref{RALS}) for $\mathbf{A,B,C}$ by using an operator $S(\cdot)$:
\begin{equation}\label{SRALS}
(\mathbf{A}^{(n+1)},\mathbf{B}^{(n+1)},\mathbf{C}^{(n+1)})=\mathbf{x}^{(n+1)}
=S(\mathbf{x}^{(n)})=S(\mathbf{A}^{(n)},\mathbf{B}^{(n)},\mathbf{C}^{(n)}),
\end{equation}
where three matrices

\begin{equation}\label{ESRALS}
\begin{split}
\mathbf{A}^{(n+1)}&=(\mathbf{T}_{(1)}(\mathbf{C}^{(n)}\odot\mathbf{B}^{(n)})+
\lambda\mathbf{A}^{(n)})((\mathbf{C}^{(n)}\odot\mathbf{B}^{(n)})^T(\mathbf{C}^{(n)}\odot\mathbf{B}^{(n)})+\lambda\mathbf{I})^{-1}, \\
\mathbf{B}^{(n+1)}&=(\mathbf{T}_{(2)}(\mathbf{C}^{(n)}\odot\mathbf{A}^{(n+1)})+
\lambda\mathbf{B}^{(n)})((\mathbf{C}^{(n)}\odot\mathbf{A}^{(n+1)})^T(\mathbf{C}^{(n)}\odot\mathbf{A}^{(n+1)})+\lambda\mathbf{I})^{-1},\\
\mathbf{C}^{(n+1)}&=(\mathbf{T}_{(3)}(\mathbf{B}^{(n+1)}\odot\mathbf{A}^{(n+1)})+
\lambda\mathbf{C}^{(n)})((\mathbf{B}^{(n+1)}\odot\mathbf{A}^{(n+1)})^T(\mathbf{B}^{(n+1)}\odot\mathbf{A}^{(n+1)})+\lambda\mathbf{I})^{-1}
\end{split}
\end{equation}

are the least square solutions of (\ref{RALS}).

The RALS algorithm can be viewed as a proximal regularization of a three block
Gauss-Seidel method for minimizing $f(\mathbf{A},\mathbf{B},\mathbf{C})$.
In the next section, we will show the global convergence of the RALS algorithm under the framework of
proximal alternating minimization \cite{attouch,bolte}.

\section{Acceleration of the RALS algorithm}
In this section, we suggest an acceleration technique for the RALS algorithm.
Our acceleration method is loosely based on the Aitken-Stefensen formula \cite{isaacson},
which is a conventional acceleration technique for numerical computation.
For a given convergent sequence $\{x^{(n)}\}_{n\in\mathbb{N}}$,
a new sequence $\{y^{(n)}\}_{n\in\mathbb{N}}$ is generated by
\begin{equation}\label{aitken}
y^{(n)}=x^{(n)}-\frac{(\bigtriangleup x^{(n)})^2}{\bigtriangleup^2x^{(n)}},
\end{equation}
where $\bigtriangleup x^{(n)}=x^{(n+1)}-x^{(n)}$ and $\bigtriangleup^2x^{(n)}=x^{(n+2)}-2x^{(n+1)}+x^{(n)}$.
For fixed point iteration,
the Aitken-Steffensen acceleration (\ref{aitken})
can achieve quadratic convergent rate \cite{isaacson} without requiring derivative terms.

The generalization of the Aitken-Stefensen process to a $k$-dimensional sequence requires
the following iterative formula:
\begin{equation}\label{aitken1}
\mathbf{y}^{(n)}=\mathbf{x}^{(n)}-\triangle\mathbf{X}^{(n)}(\triangle^2\mathbf{X}^{(n)})^{-1}\triangle\mathbf{x}^{(n)},
\end{equation}
where $\triangle\mathbf{x}^{(n)}=\mathbf{x}^{(n+1)}-\mathbf{x}^{(n)},
\triangle\mathbf{X}^{(n)}=(\mathbf{x}^{(n+1)}-\mathbf{x}^{(n)},\cdots,\mathbf{x}^{(n+k)}-\mathbf{x}^{(n+k-1)})$ and $\triangle^2\mathbf{X}^{(n)}=(\mathbf{x}^{(n+2)}-2\mathbf{x}^{(n+1)}+\mathbf{x}^{n},\cdots,\mathbf{x}^{(n+k+1)}-2\mathbf{x}^{(n+k)}+\mathbf{x}^{(n+k-1)})$.
The formula (\ref{aitken1}) for $\{\mathbf{y}^{(n)}\}_{n\in\mathbb{N}}$ also has a quadratic convergence rate
under five basic assumptions \cite{noda}.
Although the Aitken-Stefensen process for $k$-dimensional sequence theoretically has a fast convergent rate,
it has two main drawbacks in the practical implementation.
One is that to compute $\mathbf{y}^{(n)}$, an a priori set of sequences is needed, namely, $\mathbf{x}^{(1)}$ to $\mathbf{x}^{(n+k+1)}$.
Once the dimension $k$ of vector is large, the practical implement will be time-consuming especially when facing a complicated updating map.
The other is that this iterative process may be invalid
if the original sequence $\{\mathbf{x}^{(n)}\}_{n\in\mathbb{N}}$ converges fast and the dimension $k$ is large enough such that
$\mathbf{x}^{(n+k+1)}-2\mathbf{x}^{(n+k)}+\mathbf{x}^{(n+k-1)}$ is close to zero and $\triangle^2\mathbf{X}^{(n)}$ is singular.
So although the Aitken-Stefensen method can be directly
applied to the acceleration of the $r(I+J+K)$-dimensional sequence $\{\mathbf{x}^{(n)}\}_{n\in\mathbb{N}}$ generated by the RALS algorithm,
it does not work well especially when $I,J,K,r$ are large.
For example, if $I=J=K=20$ and $r=10$, then the dimension $k$ is 600.
To compute the initial vector of $\mathbf{y}^{(0)}$ from $\mathbf{x}^{(0)}$,
we need to know $601$ vectors from $\mathbf{x}^{(1)}$ to $\mathbf{x}^{(601)}$.
But the original sequence $\{\mathbf{x}^{(n)}\}_{n\in\mathbb{N}}$ from the RALS
may have already converged before $n=601$.

To obviate these drawbacks of the recursive formula (\ref{aitken1}) of vectors,
we utilize the matrix format of the update (\ref{ESRALS}) for the RALS algorithm
and propose a matrix based Aitken-Stefensen acceleration formula.
We denote the $(I+J+K)\times r$ matrix $({\mathbf{A}^{(n)}}^T,{\mathbf{B}^{(n)}}^T,{\mathbf{C}^{(n)}}^T)^T$ by $\mathbf{X}^{(n)}$,
and set the update by
\begin{equation}\label{aitken2}
\mathbf{X}_*^{(n+1)}=\mathbf{X}^{(n)}-\mathbf{Z}^{(n)},
\end{equation}
where $\mathbf{Z}^{(n)}$ is a solution of a linear system
\begin{equation}\label{linearsys}
\mathbf{Z}^{(n)}(S(S(\mathbf{X}^{(n)}))-2S(\mathbf{X}^{(n)})+\mathbf{X}^{(n)})^T= (S(\mathbf{X}^{(n)})-\mathbf{X}^{(n)})(S(\mathbf{X}^{(n)})-\mathbf{X}^{(n)})^T.
\end{equation}
Here the matrix $\mathbf{Z}^{(n)}$ can be understood as a small perturbation from $\mathbf{X}^{(n)}$
to $\mathbf{X}_*^{(n+1)}$ since $\|S(\mathbf{X}^{(n)})-\mathbf{X}^{(n)}\|_F^2$ is small
when $\mathbf{X}^{(n)}$ is close to a fixed point of $S$ (as defined by (\ref{SRALS})).
Note that $S(\mathbf{X}^{(n)})$ is based on the RALS, and
we denote the new update (\ref{aitken2}) from $\mathbf{X}^{(n)}$ to $\mathbf{X}_*^{(n+1)}$ by an operator $T$:
\begin{equation}
\mathbf{X}_*^{(n+1)}=T(\mathbf{X}^{(n)}).
\end{equation}
It can be verified that a fixed point of operator $T$
is also a fixed point of operator $S$.

Notice that besides one extra update from $S(\mathbf{X}^{(n)})$ to $(S(S(\mathbf{X}^{(n)}))$,
the formula (\ref{aitken2}) involves solving a large linear system (\ref{linearsys})
with the coefficient matrix $(S(S(\mathbf{X}^{(n)}))-2S(\mathbf{X}^{(n)})+\mathbf{X}^{(n)})^T$ of size $r\times(I+J+K)$.
If (\ref{aitken2}) is computed in each step of algorithm,
the whole time cost of the practical implement will be very huge.
So in the following Algorithm 1, we implement the formula (\ref{aitken2}) not at every step $n$,
but choose the implementation step $n$ with a fixed interval after the residual is small enough.
From another perspective, the formula allows the outer iteration of the (R)ALS algorithm to \emph{jump out}
from the linear convergent regions.
The residual gap generated by these perturbations can be quickly eliminated by
a fast decreasing speed.
Several numerical experiments are shown in the next section.

\begin{algorithm}[htb]
\renewcommand{\algorithmicrequire}{\textbf{Input:}}
\renewcommand\algorithmicensure {\textbf{Output:} }
\caption{Acceleration of RALS (RALS-A)}
\label{alg:Framwork}
\begin{algorithmic}[1]
\REQUIRE A third order tensor $\mathcal{T}\in\mathbb{R}^{I\times J\times K}$, the number $r$ of rank-one components,
an interval positive integer $q$, and a upper bound $\alpha\in\mathbb{R}$;\\
\ENSURE Three matrices $\mathbf{A}\in\mathbb{R}^{I\times r},\mathbf{B}\in\mathbb{R}^{J\times r},\mathbf{C}\in\mathbb{R}^{K\times r}$;\\
\STATE Give initial matrices $(\mathbf{A}^{(0)},\mathbf{B}^{(0)},\mathbf{C}^{(0)})$ and
let $\mathbf{X}^{(0)}=({\mathbf{A}^{(0)}}^T,{\mathbf{B}^{(0)}}^T,{\mathbf{C}^{(0)}}^T)^T$ and
set the error square as $\text{err}=\alpha$ .
\STATE Update step: \\
\STATE\textbf{for} $n=1,\cdots$ \textbf{do}\\
\STATE\hspace*{.5cm}\textbf{if} $\text{err}<\alpha$ and $n$ mod $q=0$ \textbf{do} \\
\STATE\hspace*{.5cm}\hspace*{.5cm}Compute matrices $S(S(\mathbf{X}^{(n)}))$ and $S(\mathbf{X}^{(n)})$ from $\mathbf{X}^{(n)}$.\\
\STATE\hspace*{.5cm}\hspace*{.5cm}Compute the matrix $\mathbf{X}_*^{(n+1)}$ by using (\ref{aitken2}).\\
\STATE\hspace*{.5cm}\hspace*{.5cm}$\mathbf{X}^{(n+1)}=\mathbf{X}_*^{(n+1)}$.\\
\STATE\hspace*{.5cm}\textbf{else do}\\
\STATE\hspace*{.5cm}\hspace*{.5cm}Compute the matrix $S(\mathbf{X}^{(n)})$ from $\mathbf{X}^{(n)}$.\\
\STATE\hspace*{.5cm}\hspace*{.5cm}$\mathbf{X}^{(n+1)}=S(\mathbf{X}^{(n)})$.\\
\STATE\hspace*{.5cm}\textbf{end if}
\STATE\hspace*{.5cm}$\text{err}=\|\mathbf{X}^{(n+1)}-\mathbf{X}^{(n)}\|_F^2$.
\STATE\textbf{end for}
\STATE $\mathbf{A}=\mathbf{A}^{(n)},~\mathbf{B}=\mathbf{B}^{(n)},~\mathbf{C}=\mathbf{C}^{(n)}$.\\
\RETURN  Three matrices $\mathbf{A},\mathbf{B}$ and $\mathbf{C}$.
\end{algorithmic}
\end{algorithm}

\section{Numerical experiments}
In this section we demonstrate the simulation experiments of the ALS, RALS algorithms and their accelerated versions.
Experiments are written in Matlab codes and implemented on a desktop computer with Intel i5 CPU 3.3GHz and 8G
memory. All of these algorithms are set to a tolerance error of $1\times 10^{-12}$ as a stopping criterion of
$$\|\mathbf{X}^{(n)}-\mathbf{X}^{(n-1)}\|_F^2=\|\mathbf{A}^{(n)}-\mathbf{A}^{(n-1)}\|_F^2+\|\mathbf{B}^{(n)}-\mathbf{B}^{(n-1)}\|_F^2
+\|\mathbf{C}^{(n)}-\mathbf{C}^{(n-1)}\|_F^2$$
between two subsequent iterates.
Algorithm \ref{alg:Framwork} is an accelerated version of the RALS algorithm, and we call it RALS-A.
We can similarly obtain an acceleration of the ALS algorithm; we call it ALS-A.
More specifically, the ALS-A can be obtained by replacing the update operator $S$ in Algorithm \ref{alg:Framwork}
by the operator $S_{ALS}$ in (\ref{SALS}).
The upper bound $\alpha$ is an input parameter for judging whether the original sequence is already in a linear convergent region.
While $\text{err}<\alpha$, we consider to implement the acceleration update in steps of a fixed interval $q$.
In the simulation experiments, we choose $\alpha=1\times 10^{-6}$ and $q=100$.
Except our acceleration way, we also consider the Nesterov-type updating way (RALS-Nes):
\begin{eqnarray*}\label{nesterov}
\mathbf{x}^{(n+1)}&=&S(\mathbf{x}_*^{(n)}),\\
\mathbf{x}_*^{(n+1)}&=&(1-\gamma_n)\mathbf{x}^{(n+1)}+\gamma_n\mathbf{x}^{(n)}
\end{eqnarray*}
where $\gamma_n=\frac{1-\mu_n}{\mu_{n+1}},\mu_n=\frac{1+\sqrt{1+4\mu_{n-1}^2}}{2},\mu_0=0$.
We can similarly obtain a Nesterov-type acceleration of the ALS algorithm (ALS-Nes) by replacing the update operator $S$
by the operator $S_{ALS}$.

First we consider time costs of the ALS, ALS-A, ALS-Nes, RALS, RALS-L, RALS-A and RALS-AL algorithms,
where the RALS-L and RALS-AL are two modified versions of the RALS and RALS-A with
a monotonically decreasing regularization parameter $\lambda$
to zero as the iteration number $n\rightarrow\infty$.
The rank-one component number $r$ is set to $10$ and dimensions $I=J=K$.
For each $I=10,20,50$, we do $100$ numerical experiments for these seven algorithms,
and record the corresponding seven medians of time costs on seconds.
As shown in Table \ref{tab}, the acceleration ALS-A and
RALS-A versions perform much better than the original ALS and RALS algorithms.
The RALS-L with decreasing $\lambda$ has a faster speed that the RALS,
and the RALS-AL has the fastest speed in all the algorithms basing on RALS.
The ALS-Nes consumes more times than other algorithms.
The reason may lie in that the Nesterov-type acceleration is designed for convex optimization \cite{beck,nesterov}. The main objective function of the RALS is a non-convex function while the subproblems are convex.

\begin{table}
\centering
\caption{Time costs of ALS, ALS-A, RALS,RALS-A, RALS-L and RALS-AL.}\label{tab}
    \begin{tabular}{| l | l | l | l | l | l | l | l | l |}
    \hline
    \tiny Algorithm &\tiny ALS &\tiny ALS-Nes  &\tiny ALS-A  &\tiny RALS &\tiny RALS-Nes &\tiny RALS-A &\tiny RALS-L &\tiny RALS-AL \\ \hline
     $I=10$ & 0.59  & 2.41 & 0.38 & 0.89 & 1.77  & 0.51 & 0.59 & \textbf{0.36}\\ \hline
     $I=20$ & 0.47 & 1.20 & 0.33 & 0.55  & 1.17 & 0.37 & 0.50 & \textbf{0.31} \\ \hline
     $I=50$ & 2.31 & 7.25 & \textbf{1.64} & 2.57 & 6.73 & 1.86 & 2.55 & 1.86 \\ \hline
    \end{tabular}
\end{table}

Second we consider the convergence of the ALS, ALS-A, RALS and RALS-A algorithms.
Two experiments are shown in Figure \ref{fig} according to the appearance of swamps of ALS or not.
In each experiment, $I=J=K=r=10$ and all of those algorithms use a same tensor
$\mathcal{T}\in\mathbb{R}^{10\times 10\times 10}$
with same initial factor matrices.
For the RALS and RALS-A algorithms, the regularization parameter $\lambda$ is fixed to $1$.
The plots in Figure \ref{fig} show the error square
$\|\mathbf{X}^{(n)}-\mathbf{X}^{(n-1)}\|_F^2$ versus the number $n$ of
iterations. As one can see, the convergence of the RALS algorithm is linear (see Appendix A),
and the acceleration version RALS-A has a faster convergent rate than the RALS.
This is similar for the ALS and ALS-A algorithms.
Notice that the ALS without swamps performs much better than the RALS with a fixed $\lambda$.
But as demonstrated in the following experiments,
the RALS algorithm with a decreasing $\lambda$ has a faster speed; see Table $1$.

\begin{figure*}
\centering
\subfigure[ALS with swamp.] {\includegraphics[height=2.7in,width=3.5in]{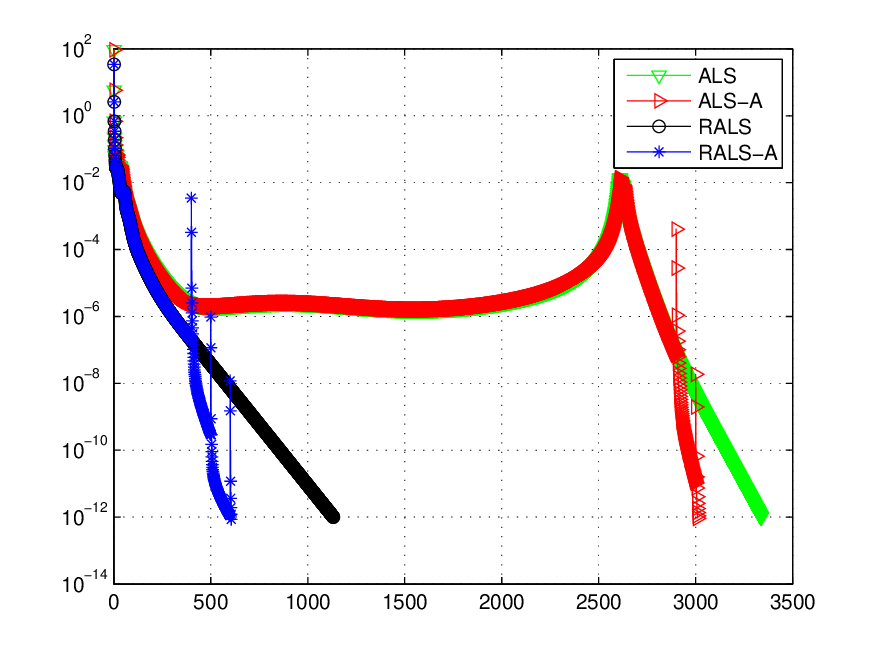}}
\subfigure[ALS without swamp.] {\includegraphics[height=2.7in,width=3.5in]{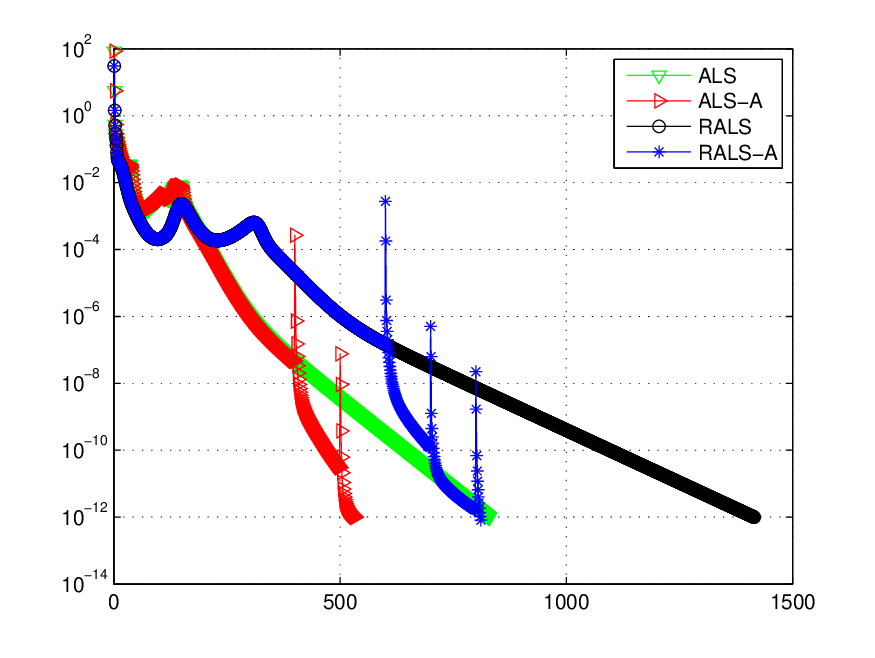}}
\caption{A comparison among ALS, ALS-A, RALS, and RALS-A.}\label{fig}
\end{figure*}

\section{Global convergence of RALS}
To discuss the global convergence of RALS, we need the Kurdyka-{\L}ojasiewicz inequality for real-analytic functions.
As shown in \cite{lojasiwicz}, we have the following proposition on the gradient inequality.

\begin{proposition}(Kurdyka-{\L}ojasiewicz Inequality)\label{KL}
Let $f(\mathbf{x})$ be a real-analytic function in a neighborhood
of $\mathbf{0}\in\mathbb{R}^n$ such that $f(\mathbf{0})=0$,
then the following inequality holds for some $0<\theta<1$
\begin{equation}
|f(\mathbf{x})|^{\theta}\leq\|\nabla f(\mathbf{x})\|
\end{equation}
in a neighborhood of $\mathbf{0}$.
\end{proposition}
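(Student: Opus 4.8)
The plan is to recognize Proposition \ref{KL} as the classical {\L}ojasiewicz gradient inequality and to prove it in two stages: an elementary reduction, followed by an argument that reads off the exponent $\theta$ from the subanalytic structure of $f$ and pins it strictly below $1$ by a one-dimensional estimate. First I would dispose of the non-critical case. If $\nabla f(\mathbf{0})\neq\mathbf{0}$, continuity gives $\|\nabla f(\mathbf{x})\|\geq\delta>0$ on a small ball, whereas $|f(\mathbf{x})|^{\theta}\to0$ as $\mathbf{x}\to\mathbf{0}$ because $f(\mathbf{0})=0$; hence the inequality holds for every $\theta\in(0,1)$ on a small enough neighborhood. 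So the substantive case is $\nabla f(\mathbf{0})=\mathbf{0}$, where I would in fact prove the quantitative form $|f(\mathbf{x})|^{\theta}\leq C\|\nabla f(\mathbf{x})\|$ for some $C>0$. The multiplicative constant is harmless: replacing $\theta$ by a slightly larger $\theta'<1$ gives $|f|^{\theta'}=|f|^{\theta}\,|f|^{\theta'-\theta}\leq C\|\nabla f\|\,|f|^{\theta'-\theta}$, and since $|f(\mathbf{x})|\to0$ the factor $C|f|^{\theta'-\theta}$ is $\leq1$ on a sufficiently small neighborhood, recovering the constant-free statement.

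The heart of the matter is to produce the exponent. I would introduce the fiber infimum
\[
\nu(s)=\inf\{\,\|\nabla f(\mathbf{x})\|:\ \|\mathbf{x}\|\leq\epsilon,\ |f(\mathbf{x})|=s\,\},\qquad s>0\ \text{small},
\]
and observe that, by construction, every $\mathbf{x}$ with $\|\mathbf{x}\|\leq\epsilon$ satisfies $\|\nabla f(\mathbf{x})\|\geq\nu(|f(\mathbf{x})|)$. Thus it suffices to show $\nu(s)\geq c\,s^{\theta}$ for small $s>0$ with some $c>0$ and some $\theta\in(0,1)$. Since $f$ is real analytic, $f$ and $\|\nabla f\|^{2}$ are analytic, the fibers $\{|f|=s\}$ are subanalytic, and the partial infimum of a subanalytic function over such fibers is again subanalytic; consequently $\nu$ admits a Puiseux-type asymptotics $\nu(s)\sim c\,s^{\theta}$ as $s\to0^{+}$ with a well-defined rational exponent $\theta\geq0$ (and $c>0$ unless $f$ is locally constant, a case in which the inequality is trivial). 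Because $s\mapsto s^{\theta}$ is decreasing in $\theta$ for small $s$, we may also enlarge $\theta$ into $(0,1)$ once we know it is $<1$. The whole problem is therefore reduced to the single inequality $\theta<1$.

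To establish $\theta<1$ I would use the curve selection lemma to realize the asymptotics of $\nu$ along an analytic path: there is a real analytic curve $\gamma:[0,\rho)\to\mathbb{R}^{n}$ with $\gamma(0)=\mathbf{0}$ along which $\|\nabla f(\gamma(t))\|\sim c\,|f(\gamma(t))|^{\theta}$. Writing $g(t)=f(\gamma(t))$, analyticity gives an expansion $g(t)=a\,t^{\alpha}+o(t^{\alpha})$ with $a\neq0$ and $\alpha>0$ (we may assume $g\not\equiv0$, else $\gamma$ lies in $\{f=0\}$), so $|g'(t)|\sim|a|\,\alpha\,t^{\alpha-1}$, while $\|\gamma'(t)\|$ stays bounded near $t=0$. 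The chain rule and Cauchy--Schwarz then yield
\[
|a|\,\alpha\,t^{\alpha-1}\ \sim\ |g'(t)|=|\langle\nabla f(\gamma(t)),\gamma'(t)\rangle|\ \leq\ \|\nabla f(\gamma(t))\|\,\|\gamma'(t)\|\ \leq\ C'\,t^{\alpha\theta},
\]
and comparing the powers of $t$ as $t\to0^{+}$ forces $\alpha-1\geq\alpha\theta$, i.e. $\theta\leq1-\frac{1}{\alpha}<1$, as required.

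The main obstacle is not this final curve computation, which is elementary once the expansions are in hand, but the two structural inputs it rests on: the subanalyticity of the fiber infimum $\nu$ and the curve selection lemma that realizes its asymptotics. Both belong to the structure theory of subanalytic sets---stratification and, ultimately, resolution of singularities (or the Weierstrass preparation theorem applied inductively on $n$)---which is precisely the deep content of {\L}ojasiewicz's theorem \cite{lojasiwicz}. For this reason I would cite that theory rather than reprove the subanalytic machinery, the genuinely new work being only the reduction to the exponent of $\nu$ and the elementary estimate along $\gamma$.
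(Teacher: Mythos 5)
The paper does not prove Proposition \ref{KL} at all: it imports the statement from the cited reference \cite{lojasiwicz}, so there is no internal argument to compare yours against. Your sketch is, in substance, the classical proof of the {\L}ojasiewicz gradient inequality --- the same line of reasoning used in the literature the paper cites: reduce to the critical case $\nabla f(\mathbf{0})=\mathbf{0}$, absorb the multiplicative constant by slightly enlarging $\theta$, study the fiber infimum $\nu(s)$ as a subanalytic function with Puiseux asymptotics $c\,s^{\beta}$, and force $\beta<1$ by curve selection plus the chain rule and Cauchy--Schwarz; the exponent comparison $\alpha-1\geq\alpha\beta$, hence $\theta\leq 1-\frac{1}{\alpha}<1$, is exactly right. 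Two points need repair, though neither is fatal. First, you cannot assert $\gamma(0)=\mathbf{0}$: the minimizers of $\|\nabla f\|$ on the fibers $\{|f|=s\}\cap \bar{B}_{\epsilon}$ may accumulate at some other point $\mathbf{x}_0$ of the zero set of $f$ inside the ball, so the curve selection lemma must be applied to the (subanalytic) attainment set at such an accumulation point; your computation then goes through verbatim, because it only uses $f(\gamma(0))=0$, never $\gamma(0)=\mathbf{0}$. Second, the positivity $\nu(s)>0$ for small $s>0$ (equivalently $c>0$) does not follow merely from ``$f$ not locally constant'': it holds because each fiber is compact, so the infimum is attained, and the critical values of $f$ on $\bar{B}_{\epsilon}$ form a measure-zero subanalytic subset of $\mathbb{R}$, hence a finite set, so fibers over sufficiently small $s>0$ contain no critical points. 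With these two repairs your outline is a correct proof modulo the standard subanalytic machinery (Puiseux expansions of one-variable subanalytic functions, stability of subanalyticity under partial infima, curve selection), which you reasonably cite rather than reprove --- just as the paper itself cites the entire proposition.
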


Furthermore, if $f$ is a real-analytic function in a neighborhood of $\mathbf{a}\in\mathbb{R}^n$,
thus $g(\mathbf{x})=f(\mathbf{a}+\mathbf{x})-f(\mathbf{a})$ is a real-analytic
function in a neighborhood of $\mathbf{0}\in\mathbb{R}^n$ and $g(\mathbf{0})=0$.
From this Proposition \ref{KL}, we have that
$|f(\mathbf{a}+\mathbf{x})-f(\mathbf{a})|^\theta\leq\|\nabla f(\mathbf{a}+\mathbf{x})\|$
for any $\mathbf{x}$ in a neighborhood of $\mathbf{0}$.
It also follows that $|f(\mathbf{x})-f(\mathbf{a})|^\theta\leq\|\nabla f(\mathbf{x})\|$ for any $\mathbf{x}$ in a neighborhood of $\mathbf{a}$.
So we obtain another proposition as follows.

\begin{proposition}\label{KL2}
Let $f(\mathbf{x})$ be a real-analytic function on $\mathbb{R}^n$.
For any $\mathbf{a}\in\mathbb{R}^n$, there exists a real number $0<\theta<1$ and a neighborhood $U$ of $\mathbf{a}$ such that
\begin{equation}
|f(\mathbf{x})-f(\mathbf{a})|^\theta\leq\|\nabla f(\mathbf{x})\|
\end{equation}
for any $\mathbf{x}\in U$
\end{proposition}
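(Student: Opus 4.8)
The plan is to reduce the statement to Proposition \ref{KL} by a translation that moves the base point $\mathbf{a}$ to the origin and normalizes the function value to zero. Since Proposition \ref{KL} is stated only for analytic functions vanishing at $\mathbf{0}$, the entire purpose of this change of variables is to manufacture such a normalized function out of an arbitrary analytic $f$ and an arbitrary point $\mathbf{a}$, so that the already-established gradient inequality can be invoked verbatim.

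Concretely, I would set $g(\mathbf{x}) = f(\mathbf{a}+\mathbf{x}) - f(\mathbf{a})$. The first step is to verify that $g$ is real analytic in a neighborhood of $\mathbf{0}$: this holds because $\mathbf{x}\mapsto\mathbf{a}+\mathbf{x}$ is affine and hence real analytic, the composition of real analytic maps is real analytic, and subtracting the constant $f(\mathbf{a})$ does not disturb analyticity. By construction $g(\mathbf{0}) = f(\mathbf{a}) - f(\mathbf{a}) = 0$, so $g$ satisfies the hypotheses of Proposition \ref{KL}. Applying that proposition to $g$ produces an exponent $0<\theta<1$ and a neighborhood $V$ of $\mathbf{0}$ with $|g(\mathbf{y})|^{\theta}\le\|\nabla g(\mathbf{y})\|$ for all $\mathbf{y}\in V$.

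The last step is to translate this inequality back to $f$. By the chain rule the gradient of the shifted function satisfies $\nabla g(\mathbf{y}) = \nabla f(\mathbf{a}+\mathbf{y})$, since the Jacobian of the translation $\mathbf{y}\mapsto\mathbf{a}+\mathbf{y}$ is the identity. Substituting $\mathbf{x}=\mathbf{a}+\mathbf{y}$ and taking $U = \mathbf{a}+V = \{\mathbf{a}+\mathbf{y} : \mathbf{y}\in V\}$, which is a neighborhood of $\mathbf{a}$, converts $|g(\mathbf{y})|^{\theta}\le\|\nabla g(\mathbf{y})\|$ into $|f(\mathbf{x})-f(\mathbf{a})|^{\theta}\le\|\nabla f(\mathbf{x})\|$ for every $\mathbf{x}\in U$, which is exactly the desired conclusion.

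I do not expect a genuine obstacle: the argument is purely a bookkeeping reduction to Proposition \ref{KL}, and the only two points requiring care are the invariance of analyticity under the affine substitution and the gradient identity $\nabla g(\mathbf{y})=\nabla f(\mathbf{a}+\mathbf{y})$, both of which are routine. One feature worth flagging is that the exponent $\theta$ and the neighborhood $U$ are simply inherited from Proposition \ref{KL} applied to $g$, so the proof yields no quantitative control over $\theta$ beyond $0<\theta<1$; this is consistent with how the result will later be used, where only the existence of such a $\theta$ matters for the global convergence analysis.
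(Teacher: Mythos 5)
Your proposal is correct and follows exactly the same route as the paper: both define the shifted function $g(\mathbf{x})=f(\mathbf{a}+\mathbf{x})-f(\mathbf{a})$, observe it is real analytic with $g(\mathbf{0})=0$, apply Proposition \ref{KL}, and translate the resulting inequality back to a neighborhood of $\mathbf{a}$. Your write-up is in fact slightly more careful than the paper's, since you make the gradient identity $\nabla g(\mathbf{y})=\nabla f(\mathbf{a}+\mathbf{y})$ and the choice $U=\mathbf{a}+V$ explicit.
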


By using Proposition \ref{KL2} and the finite subcover property of compact set,
we have the following proposition \cite{bolte,haraux}.
\begin{proposition}\label{KL3}
Let $E$ be the set of critical points of $f$, and $\Gamma$ be compact and connected subset of $E$.
If $f$ is a real-analytic function on $\mathbb{R}^n$ and $\mathbf{a}\in\Gamma$, then \\
(i) for any $\mathbf{b}\in\Gamma$, $f(\mathbf{b})=f(\mathbf{a})\triangleq\underline{f}$.\\
(ii) there is a neighborhood $U$ of $\Gamma$ and a real number $0<\theta<1$ such that
\begin{equation}
\forall \mathbf{x}\in U, |f(\mathbf{x})-\underline{f}|^\theta\leq\|\nabla f(\mathbf{x})\|.
\end{equation}
\end{proposition}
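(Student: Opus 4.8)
The plan is to prove both parts by a localization-then-compactness argument, using Proposition \ref{KL2} as the only analytic input. For part (i), I would show that $f$ is locally constant on $\Gamma$ and then invoke connectedness. Fix any $\mathbf{p}\in\Gamma$; Proposition \ref{KL2} furnishes a neighborhood $U_\mathbf{p}$ and an exponent $\theta_\mathbf{p}\in(0,1)$ with $|f(\mathbf{x})-f(\mathbf{p})|^{\theta_\mathbf{p}}\leq\|\nabla f(\mathbf{x})\|$ for all $\mathbf{x}\in U_\mathbf{p}$. For any other point $\mathbf{q}\in\Gamma\cap U_\mathbf{p}$, the fact that $\mathbf{q}$ is a critical point gives $\nabla f(\mathbf{q})=0$, so the inequality forces $|f(\mathbf{q})-f(\mathbf{p})|^{\theta_\mathbf{p}}\leq 0$, i.e. $f(\mathbf{q})=f(\mathbf{p})$. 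Hence $f$ is constant on $\Gamma\cap U_\mathbf{p}$, so $f$ is locally constant on $\Gamma$; since $\Gamma$ is connected this value is the same throughout, and I set $\underline{f}=f(\mathbf{a})$.

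For part (ii), I would patch the local inequalities together. By part (i), $f(\mathbf{p})=\underline{f}$ for every $\mathbf{p}\in\Gamma$, so the inequality from Proposition \ref{KL2} at each $\mathbf{p}$ reads $|f(\mathbf{x})-\underline{f}|^{\theta_\mathbf{p}}\leq\|\nabla f(\mathbf{x})\|$ on $U_\mathbf{p}$. Before extracting a subcover I would shrink each $U_\mathbf{p}$ (using continuity of $f$ together with $f(\mathbf{p})=\underline{f}$) so that $|f(\mathbf{x})-\underline{f}|\leq 1$ holds throughout $U_\mathbf{p}$; the KL inequality is of course preserved on the smaller set. The family $\{U_\mathbf{p}\}_{\mathbf{p}\in\Gamma}$ is an open cover of the compact set $\Gamma$, so a finite subcover $U_{\mathbf{p}_1},\dots,U_{\mathbf{p}_m}$ exists; put $U=\bigcup_{i=1}^m U_{\mathbf{p}_i}$, a neighborhood of $\Gamma$.

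The crux is then to reconcile the finitely many exponents $\theta_{\mathbf{p}_1},\dots,\theta_{\mathbf{p}_m}$ into a single $\theta$. I would take $\theta=\max_{1\leq i\leq m}\theta_{\mathbf{p}_i}$, which still lies in $(0,1)$. For $\mathbf{x}\in U$ pick an index $i$ with $\mathbf{x}\in U_{\mathbf{p}_i}$; since $|f(\mathbf{x})-\underline{f}|\leq 1$ and $\theta\geq\theta_{\mathbf{p}_i}$, raising a number in $[0,1]$ to a larger exponent only decreases it, so $|f(\mathbf{x})-\underline{f}|^{\theta}\leq|f(\mathbf{x})-\underline{f}|^{\theta_{\mathbf{p}_i}}\leq\|\nabla f(\mathbf{x})\|$, as required. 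The one step that needs care is precisely this uniformization: without first arranging $|f(\mathbf{x})-\underline{f}|\leq 1$ the elementary inequality $t^{\theta}\leq t^{\theta_i}$ would fail for $t>1$, so the preliminary shrinking of the neighborhoods is essential rather than cosmetic. Everything else is bookkeeping built on the single-point statement of Proposition \ref{KL2} and the compactness of $\Gamma$.
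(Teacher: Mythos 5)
Your proof is correct and takes essentially the same route the paper indicates: the paper derives this proposition precisely ``by using Proposition \ref{KL2} and the finite subcover property of compact set'' (citing Bolte et al.\ and Haraux), which is exactly your localization-plus-compactness argument, with part (i) following from the gradient vanishing on $\Gamma$ plus connectedness. Your uniformization step --- shrinking each neighborhood so $|f(\mathbf{x})-\underline{f}|\leq 1$ before taking $\theta=\max_{i}\theta_{\mathbf{p}_i}$ --- correctly supplies the one detail the paper leaves implicit.
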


In the RALS algorithm,
the residual function $f(\mathbf{x})=f(\mathbf{A},\mathbf{B},\mathbf{C})$ is a polynomial function on
$\mathscr{X}=\mathbb{R}^{I\times r}\times\mathbb{R}^{J\times r}\times\mathbb{R}^{K\times r}$.
So it is also a real-analytic function on $\mathscr{X}$.
Unlike the work of Li et. al. \cite{li} showing that every limit point is a critical point,
the following theorem points out the global convergence of the RALS algorithm. Its proof is based on the Kurdyka-{\L}ojasiewicz inequality and
the proximal alternating minimum technique \cite{attouch,attouch1,bolte}.

\begin{theorem}
Let $\{\mathbf{x}^{(n)}\}_{n\in\mathbb{N}}$ be the sequence generated by the RALS algorithm.
If the sequence $\{\mathbf{x}^{(n)}\}_{n\in\mathbb{N}}$ is bounded,
this sequence converges to a critical point $\mathbf{x}^*$ of $f(\mathbf{x})$.
\end{theorem}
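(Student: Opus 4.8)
The plan is to follow the now-standard three-step recipe for proving convergence of descent-type methods in the Kurdyka-{\L}ojasiewicz framework (as developed in \cite{attouch,attouch1,bolte}): establish a \emph{sufficient decrease} of $f$ along the iterates, derive a \emph{gradient bound} tying $\|\nabla f(\mathbf{x}^{(n)})\|$ to the step length $\|\mathbf{x}^{(n+1)}-\mathbf{x}^{(n)}\|_F$, and then combine these with the KL inequality of Proposition \ref{KL3} to show the sequence has finite length and hence converges.

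First I would prove sufficient decrease. Because $\mathbf{A}^{(n+1)}$ minimizes $f(\cdot,\mathbf{B}^{(n)},\mathbf{C}^{(n)})+\tfrac{\lambda}{2}\|\cdot-\mathbf{A}^{(n)}\|_F^2$, comparing its value against that attained at the feasible point $\mathbf{A}^{(n)}$ gives
$$f(\mathbf{A}^{(n+1)},\mathbf{B}^{(n)},\mathbf{C}^{(n)})+\tfrac{\lambda}{2}\|\mathbf{A}^{(n+1)}-\mathbf{A}^{(n)}\|_F^2\le f(\mathbf{A}^{(n)},\mathbf{B}^{(n)},\mathbf{C}^{(n)}).$$
Writing the analogous inequalities for the $\mathbf{B}$- and $\mathbf{C}$-updates and adding the three, the intermediate function values telescope, yielding
$$f(\mathbf{x}^{(n+1)})\le f(\mathbf{x}^{(n)})-\tfrac{\lambda}{2}\|\mathbf{x}^{(n+1)}-\mathbf{x}^{(n)}\|_F^2.$$
Since $f\ge 0$ is bounded below, the scalar sequence $\{f(\mathbf{x}^{(n)})\}$ is nonincreasing and converges to some limit $\underline{f}$, and summing the decrease over $n$ gives $\sum_n\|\mathbf{x}^{(n+1)}-\mathbf{x}^{(n)}\|_F^2<\infty$.

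Second I would derive the gradient bound. The first-order optimality condition of the $\mathbf{A}$-subproblem reads $\nabla_{\mathbf{A}}f(\mathbf{A}^{(n+1)},\mathbf{B}^{(n)},\mathbf{C}^{(n)})=-\lambda(\mathbf{A}^{(n+1)}-\mathbf{A}^{(n)})$, and similarly for the other two blocks. These control the partial gradients evaluated at the \emph{mixed} points rather than at $\mathbf{x}^{(n+1)}$ itself; to pass to $\nabla f(\mathbf{x}^{(n+1)})$ I would use that $\nabla f$ is Lipschitz on the bounded set containing the iterates (which holds since $f$ is a polynomial), so that replacing the stale blocks by the updated ones costs at most $L\,\|\mathbf{x}^{(n+1)}-\mathbf{x}^{(n)}\|_F$. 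This produces a constant $M>0$ with $\|\nabla f(\mathbf{x}^{(n+1)})\|\le M\,\|\mathbf{x}^{(n+1)}-\mathbf{x}^{(n)}\|_F$. In particular $\nabla f(\mathbf{x}^{(n)})\to 0$, so every limit point of the bounded sequence is a critical point, and the limit-point set $\Omega$ is nonempty, compact and connected, with $f\equiv\underline{f}$ on $\Omega$ by the decrease property.

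Finally, I would run the KL finite-length argument. Applying Proposition \ref{KL3} with $\Gamma=\Omega$ furnishes a neighborhood $U$ of $\Omega$ and an exponent $0<\theta<1$ such that $|f(\mathbf{x})-\underline{f}|^\theta\le\|\nabla f(\mathbf{x})\|$ on $U$. Setting $\varphi_n=(f(\mathbf{x}^{(n)})-\underline{f})^{1-\theta}$ and using the concavity of $t\mapsto t^{1-\theta}$ together with the decrease and gradient bounds, one obtains, for iterates lying in $U$, an estimate of the form
$$\|\mathbf{x}^{(n+1)}-\mathbf{x}^{(n)}\|_F\le C\,(\varphi_n-\varphi_{n+1}),$$
whose right-hand side telescopes. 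Hence $\sum_n\|\mathbf{x}^{(n+1)}-\mathbf{x}^{(n)}\|_F<\infty$, so $\{\mathbf{x}^{(n)}\}$ is Cauchy and converges to a single point $\mathbf{x}^*$, which is critical by the gradient bound. The step I expect to be most delicate is guaranteeing that the tail of the sequence actually \emph{enters and remains inside} $U$, so that the KL inequality may legitimately be invoked at every index: this requires the usual uniformization argument, bounding the running distance to $\Omega$ by the increments and the gap $\varphi_n$, and treating separately the degenerate case in which $f(\mathbf{x}^{(n)})=\underline{f}$ at some finite $n$, where the decrease property forces the sequence to become stationary.
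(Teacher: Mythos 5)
Your proposal is correct and follows essentially the same route as the paper's proof: sufficient decrease from the proximal subproblems, the gradient bound $\|\nabla f(\mathbf{x}^{(n+1)})\|\le d\|\mathbf{x}^{(n+1)}-\mathbf{x}^{(n)}\|$ obtained from the optimality conditions plus Lipschitz estimates on the bounded iterate set, Proposition \ref{KL3} applied to the compact connected limit-point set, and the concavity--telescoping argument giving finite length, the Cauchy property, and convergence to a critical point. Two small points of comparison: the inequality this chain actually produces is $\|\mathbf{x}^{(n+1)}-\mathbf{x}^{(n)}\|^2/\|\mathbf{x}^{(n)}-\mathbf{x}^{(n-1)}\|\le C(\varphi_n-\varphi_{n+1})$ with $\varphi_n=(f(\mathbf{x}^{(n)})-\underline{f})^{1-\theta}$, which telescopes only after the additional step $2\|\mathbf{x}^{(n+1)}-\mathbf{x}^{(n)}\|\le\|\mathbf{x}^{(n)}-\mathbf{x}^{(n-1)}\|+C(\varphi_n-\varphi_{n+1})$ (exactly the paper's $e_{n,n+1}$ manipulation), and the step you flag as most delicate---keeping the tail inside $U$---requires no uniformization argument here, because Proposition \ref{KL3} furnishes the KL inequality on a neighborhood of the \emph{entire} limit-point set $L$, and a bounded sequence with vanishing increments satisfies $\mathrm{dist}(\mathbf{x}^{(n)},L)\to 0$, so $\mathbf{x}^{(n)}\in U$ for all large $n$ automatically.
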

\begin{proof}
In the RALS algorithm, the residual function $f(\mathbf{x})=f(\mathbf{A},\mathbf{B},\mathbf{C})=
\frac{1}{2}\|\mathcal{T}-\sum\limits_{s=1}^r\mathbf{a}_s\circ\mathbf{b}_s\circ\mathbf{c}_s\|^2$ is a polynomial function on
$\mathscr{X}=\mathbb{R}^{I\times r}\times\mathbb{R}^{J\times r}\times\mathbb{R}^{K\times r}$,
where $\mathbf{a}_s,\mathbf{b}_s,\mathbf{c}_s$ are columns of $\mathbf{A,B}$ and $\mathbf{C}$ respectively.
From (\ref{RALS}),
we know that
\begin{equation}\label{decspeed}
f(\mathbf{x}^{(n)})-f(\mathbf{x}^{(n+1)})\geq\frac{1}{2}\lambda\|\mathbf{x}^{(n+1)}-\mathbf{x}^{(n)}\|^2
\end{equation}
and
\begin{equation}
\label{gradient}
\begin{aligned}
\nabla_{\mathbf{A}} f(\mathbf{A}^{(n+1)},\mathbf{B}^{(n)},\mathbf{C}^{(n)})+\lambda(\mathbf{A}^{(n+1)}-\mathbf{A}^{(n)})=0,\\
\nabla_{\mathbf{B}} f(\mathbf{A}^{(n+1)},\mathbf{B}^{(n+1)},\mathbf{C}^{(n)})+\lambda(\mathbf{B}^{(n+1)}-\mathbf{B}^{(n)})=0,\\
\nabla_{\mathbf{C}} f(\mathbf{A}^{(n+1)},\mathbf{B}^{(n+1)},\mathbf{C}^{(n+1)})+\lambda(\mathbf{C}^{(n+1)}-\mathbf{C}^{(n)})=0.\\
\end{aligned}
\end{equation}
From (\ref{decspeed}), we have that $\lim\limits_{n\rightarrow\infty}\|\mathbf{x}^{(n+1)}-\mathbf{x}^{(n)}\|=0$
and $\{f(\mathbf{x}^{(n)})\}_{n\in\mathbb{N}}$ is a monotonically decreasing sequence.
Let $\underline{f}=\lim\limits_{n\rightarrow\infty}f(\mathbf{x}^{(n)})$.

Due to the boundedness of $\{\mathbf{x}^{(n)}\}_{n\in\mathbb{N}}$,
the first equality in (\ref{gradient}) and the differentiability of $f(\mathbf{x})$,
there exist constants $\lambda_1,\lambda_2>0$ and $\mu_1>0$ such that
\begin{eqnarray*}
\|\nabla_{\mathbf{A}} f(\mathbf{A}^{(n+1)},\mathbf{B}^{(n+1)},\mathbf{C}^{(n+1)})\|_F &\leq&\|\nabla_{\mathbf{A}} f(\mathbf{A}^{(n+1)},\mathbf{B}^{(n+1)},\mathbf{C}^{(n+1)})-\nabla_{\mathbf{A}} f(\mathbf{A}^{(n+1)},\mathbf{B}^{(n)},\mathbf{C}^{(n)})\|_F \\
&&+\|\nabla_{\mathbf{A}} f(\mathbf{A}^{(n+1)},\mathbf{B}^{(n)},\mathbf{C}^{(n)})\|_F\\
&\leq&\lambda_1\|\mathbf{B}^{(n+1)}-\mathbf{B}^{(n)}\|_F+\lambda_2\|\mathbf{C}^{(n+1)}-\mathbf{C}^{(n)}\|_F+\lambda\|\mathbf{A}^{(n+1)}-\mathbf{A}^{(n)}\|_F\\
&\leq&\mu_1\|\mathbf{x}^{(n+1)}-\mathbf{x}^{(n)}\|
\end{eqnarray*}
for any $n\in\mathbb{N}$.
Similarly, there exist constants $\mu_2,\mu_3>0$ such that
\begin{eqnarray*}
\|\nabla_{\mathbf{B}} f(\mathbf{A}^{(n+1)},\mathbf{B}^{(n+1)},\mathbf{C}^{(n+1)})\|_F\leq\mu_2\|\mathbf{x}^{(n+1)}-\mathbf{x}^{(n)}\|\\
\|\nabla_{\mathbf{C}} f(\mathbf{A}^{(n+1)},\mathbf{B}^{(n+1)},\mathbf{C}^{(n+1)})\|_F\leq\mu_3\|\mathbf{x}^{(n+1)}-\mathbf{x}^{(n)}\|.
\end{eqnarray*}
It follows that there exists a constant $d>0$ such that
\begin{equation}\label{gradientcontrol}
\|\nabla_{\mathbf{x}}f(\mathbf{x}^{(n+1)})\|\leq d\|\mathbf{x}^{(n+1)}-\mathbf{x}^{(n)}\|
\end{equation}
for any $n\in\mathbb{N}$.

Denote the limit point set of $\{\mathbf{x}^{(n)}\}_{n\in\mathbb{N}}$ by $L$.
From the inequality (\ref{gradientcontrol}), any point in $L$ is a critical point of $f$.
It can be also checked that $L$ is a compact and connected set since
$\{\mathbf{x}^{(n)}\}_{n\in\mathbb{N}}$ is bounded and
$\lim\limits_{n\rightarrow\infty}\|\mathbf{x}^{(n+1)}-\mathbf{x}^{(n)}\|=0$.
So from Proposition \ref{KL3}, we have $f(\mathbf{x})=\underline{f}$ for any $\mathbf{x}\in L$,
and there is a neighborhood $U$ of $L$
and a real number $0<\theta<1$ such that
$|f(\mathbf{x})-\underline{f}|^\theta\leq\|\nabla f(\mathbf{x})\|$ for any $\mathbf{x}\in U$.
Since $L$ is the limit point set of $\{\mathbf{x}^{(n)}\}_{n\in\mathbb{N}}$,
it follows that $\mathbf{x}^{(n)}\in U$ when $n$ is large enough.
So there exists a positive integer $l$ such that
$|f(\mathbf{x}^{(n)})-\underline{f}|^\theta\leq\|\nabla f(\mathbf{x}^{(n)})\|$ when $n\geq l$.

Since the concavity of function $g(y)=(y-\underline{f})^{1-\theta}$ for some $0<\theta<1$ when $y\geq\underline{f}$,
\begin{equation}
\frac{(f(\mathbf{x}^{(n)})-\underline{f})^{1-\theta}-(f(\mathbf{x}^{(n+1)})-\underline{f})^{1-\theta}}{f(\mathbf{x}^{(n)})-f(\mathbf{x}^{(n+1)})}
\geq(1-\theta)(f(\mathbf{x}^{(n)})-\underline{f})^{-\theta}.
\end{equation}
Since $f(\mathbf{x}^{(n)})-f(\mathbf{x}^{(n+1)})\geq\frac{1}{2}\lambda\|\mathbf{x}^{(n+1)}-\mathbf{x}^{(n)}\|^2$ and $(f(\mathbf{x}^{(n)})-\underline{f})^{\theta}\leq\|\nabla f(\mathbf{x}^{(n)})\|\leq d\|\mathbf{x}^{(n)}-\mathbf{x}^{(n-1)}\|$,
we have that
\begin{equation}
\frac{2d((f(\mathbf{x}^{(n)})-\underline{f})^{1-\theta}-(f(\mathbf{x}^{(n+1)})-\underline{f})^{1-\theta})}{(1-\theta)\lambda}\geq\frac{\|\mathbf{x}^{(n+1)}-\mathbf{x}^{(n)}\|^2}{\|\mathbf{x}^{(n)}-\mathbf{x}^{(n-1)}\|}.
\end{equation}
Denote $\frac{2d((f(\mathbf{x}^{(n)})-\underline{f})^{1-\theta}-(f(\mathbf{x}^{(m)})-\underline{f})^{1-\theta})}{(1-\theta)\lambda}$
by $e_{n,m}$ where $m\geq n$.
So $\|\mathbf{x}^{(n+1)}-\mathbf{x}^{(n)}\|^2\leq\|\mathbf{x}^{(n)}-\mathbf{x}^{(n-1)}\|e_{n,n+1}$.
Moreover, $2\|\mathbf{x}^{(n+1)}-\mathbf{x}^{(n)}\|\leq\|\mathbf{x}^{(n)}-\mathbf{x}^{(n-1)}\|+e_{n,n+1}$. Thus,
\begin{eqnarray*}
2\sum\limits_{n=l}^k\|\mathbf{x}^{(n+1)}-\mathbf{x}^{(n)}\|&\leq &\sum\limits_{n=l}^k\|\mathbf{x}^{(n)}-\mathbf{x}^{(n-1)}\|+\sum\limits_{n=l}^ke_{n,n+1}\\
&\leq &\sum\limits_{n=l}^k\|\mathbf{x}^{(n+1)}-\mathbf{x}^{(n)}\|+\|\mathbf{x}^{(l)}-\mathbf{x}^{(l-1)}\|+\sum\limits_{n=l}^ke_{n,n+1}\\
&= &\sum\limits_{n=l}^k\|\mathbf{x}^{(n+1)}-\mathbf{x}^{(n)}\|+\|\mathbf{x}^{(l)}-\mathbf{x}^{(l-1)}\|+e_{l,k+1}
\end{eqnarray*}
So $\sum\limits_{n=l}^k\|\mathbf{x}^{(n+1)}-\mathbf{x}^{(n)}\|\leq\|\mathbf{x}^{(l)}-\mathbf{x}^{(l-1)}\|+e_{l,k+1}$.
Since $\lim\limits_{n\rightarrow\infty}\|\mathbf{x}^{(n+1)}-\mathbf{x}^{(n)}\|=0$ and $e_{l,k+1}$ is bounded for any $k\geq l$,
$\{\mathbf{x}^{(n)}\}_{n\in\mathbb{N}}$ is a Cauchy sequence.
So $\lim\limits_{n\rightarrow\infty}\mathbf{x}^{(n)}=\mathbf{x}^*$ and $\nabla f(\mathbf{x}^*)=0$.
\end{proof}

The proof here can also be shown by using the techniques in \cite{absil}
since the RALS algorithm satisfies the strong descent conditions of analytic cost functions.
As shown in \cite{attouch,bolte},
the global convergence rate can be further discussed regarding the value of $\theta$.
In particular, $\theta\in(0,1/2]$ gives a linear global convergent rate while $\theta\in(1/2,1)$ leads to a sublinear one.
But there is no further information on the specific value of $\theta$ for the residual function of the RALS algorithm.
In Appendix A, we discuss the local convergence rate of RALS and
show that when the sequence is close enough to the local minimum point,
the RALS algorithm has a linear local convergence rate.

\section{Conclusions and future outlook}
We discuss the convergence and acceleration of the regularized alternating least
square (RALS) algorithm for tensor approximation.
Under mild conditions, the RALS algorithm has a global convergence and a linear local convergence rate (see Appendix A).
As shown in the simulation experiments,
the accelerated versions of (R)ALS algorithm provide a faster speed compared to original ones.
Although the update map $T$ for the acceleration can also keep fixed points,
it still lacks of the theoretical guarantee on the effectiveness of acceleration.
Moreover, we would like to understand why a faster convergent rate can be obtained
by decreasing the regularization parameter to zero.
Furthermore, we are very interested in knowing if these convergence theories has any connection in generating swamps for tensor approximations.

\textbf{Acknowledgements.}
The authors are thankful to Hedy Attouch for some valuable suggestions on some references.

\appendix

\section{Local convergence rate of RALS}
First we introduce some basic properties of the update operator S defined in (\ref{SRALS}).
\begin{theorem}\label{fixpoint}
The operator $S$ is smooth on the space
$\mathscr{X}=\mathbb{R}^{I\times r}\times\mathbb{R}^{J\times r}\times\mathbb{R}^{K\times r}$.
If $\mathbf{x}^*=(\mathbf{A}^*,\mathbf{B}^*,\mathbf{C}^*)$ is a local minimum point of $f$,
$\mathbf{x}^*$ is a fixed point of $S$.
\end{theorem}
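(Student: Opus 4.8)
```latex
\textbf{Proof proposal.}

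The plan is to establish the two claims separately. For smoothness of $S$, I would
argue that each of the three matrices in (\ref{ESRALS}) is obtained from the previous
blocks by a composition of polynomial maps (matrix products, formation of the
Khatri-Rao product, and transposition) together with a single matrix inversion of the
form $(\mathbf{M}^T\mathbf{M}+\lambda\mathbf{I})^{-1}$. The key observation is that, because
$\lambda>0$, the matrix $\mathbf{M}^T\mathbf{M}+\lambda\mathbf{I}$ is symmetric positive definite
for \emph{every} choice of factor matrices, hence always invertible; there is no
collinearity obstruction as there is for the plain ALS operator. Since matrix inversion
is a smooth (indeed real-analytic) map on the open set of invertible matrices, each
update in (\ref{ESRALS}) is a smooth function of its inputs. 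Composing the three updates
in order --- $\mathbf{A}^{(n+1)}$ depends smoothly on $(\mathbf{A}^{(n)},\mathbf{B}^{(n)},\mathbf{C}^{(n)})$,
then $\mathbf{B}^{(n+1)}$ on $(\mathbf{A}^{(n+1)},\mathbf{C}^{(n)},\mathbf{B}^{(n)})$, and so on ---
shows that $S$ is smooth on all of $\mathscr{X}$.

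For the fixed-point claim, let $\mathbf{x}^*=(\mathbf{A}^*,\mathbf{B}^*,\mathbf{C}^*)$ be a local
minimizer of $f$. Since $f$ is a polynomial, it is differentiable, so $\mathbf{x}^*$ is a
critical point: $\nabla f(\mathbf{x}^*)=0$, and in particular each partial gradient vanishes,
$\nabla_{\mathbf{A}}f(\mathbf{x}^*)=\nabla_{\mathbf{B}}f(\mathbf{x}^*)=\nabla_{\mathbf{C}}f(\mathbf{x}^*)=0$.
I would now feed $\mathbf{x}^*$ into the RALS update and show the iterate does not move.
Consider the first subproblem of (\ref{RALS}) started at $\mathbf{x}^*$: the objective
$\mathbf{A}\mapsto f(\mathbf{A},\mathbf{B}^*,\mathbf{C}^*)+\tfrac{1}{2}\lambda\|\mathbf{A}-\mathbf{A}^*\|_F^2$
is strictly convex (the proximal term makes it so, as already noted after (\ref{RALS})),
so it has a unique minimizer characterized by its stationarity condition
$\nabla_{\mathbf{A}}f(\mathbf{A},\mathbf{B}^*,\mathbf{C}^*)+\lambda(\mathbf{A}-\mathbf{A}^*)=0$.
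Because $\nabla_{\mathbf{A}}f(\mathbf{A}^*,\mathbf{B}^*,\mathbf{C}^*)=0$, the choice $\mathbf{A}=\mathbf{A}^*$
satisfies this stationarity condition, and by uniqueness it \emph{is} the minimizer; hence
$\mathbf{A}^{*(n+1)}=\mathbf{A}^*$.

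The remaining point is that once the first block returns $\mathbf{A}^*$ unchanged, the
second subproblem is solved with $\mathbf{A}^{(n+1)}=\mathbf{A}^*$ and $\mathbf{C}^{(n)}=\mathbf{C}^*$,
so it reads $\mathbf{B}\mapsto f(\mathbf{A}^*,\mathbf{B},\mathbf{C}^*)+\tfrac{1}{2}\lambda\|\mathbf{B}-\mathbf{B}^*\|_F^2$;
by the identical argument, using $\nabla_{\mathbf{B}}f(\mathbf{A}^*,\mathbf{B}^*,\mathbf{C}^*)=0$ and
strict convexity, the unique minimizer is $\mathbf{B}^*$. The same reasoning applied to the
third block with $\mathbf{A}^*,\mathbf{B}^*$ fixed yields $\mathbf{C}^*$. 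Therefore
$S(\mathbf{x}^*)=(\mathbf{A}^*,\mathbf{B}^*,\mathbf{C}^*)=\mathbf{x}^*$, so $\mathbf{x}^*$ is a fixed point.
I do not expect a serious obstacle here; the only subtlety to state carefully is that the
sequential (Gauss-Seidel) structure is handled by the cascading argument above, where each
solved block must be checked to leave the \emph{subsequent} subproblems in exactly the
configuration that reproduces $\mathbf{x}^*$. The crux is simply that vanishing of the full
gradient forces vanishing of each block gradient, which pairs exactly with the proximal
stationarity condition at $\mathbf{x}^*$.
```
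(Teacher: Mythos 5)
Your proof is correct, but the fixed-point half proceeds by a genuinely different route than the paper's. For smoothness you and the paper argue the same way---$S$ is given by the explicit formulas (\ref{ESRALS})---though you make explicit a point the paper leaves implicit, namely that $\lambda>0$ forces $(\mathbf{C}\odot\mathbf{B})^T(\mathbf{C}\odot\mathbf{B})+\lambda\mathbf{I}$ to be symmetric positive definite, hence invertible for every input, so no Moore--Penrose caveat is needed for $S$ (unlike for $S_{ALS}$). For the fixed-point claim, you use first-order optimality: a local minimizer satisfies $\nabla f(\mathbf{x}^*)=0$, so each block gradient vanishes; since each proximal subproblem in (\ref{RALS}) is strictly convex and differentiable, its unique minimizer is its unique stationary point, and $\mathbf{A}^*$ (then $\mathbf{B}^*$, then $\mathbf{C}^*$, cascading through the Gauss--Seidel sweep) satisfies the stationarity equation, hence is that minimizer. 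The paper instead argues by contradiction without ever invoking gradients: if the update moved, i.e.\ $\mathbf{A}^{(n+1)}\neq\mathbf{A}^*$, then strict optimality of $\mathbf{A}^{(n+1)}$ in the subproblem gives $f(\mathbf{A}^{(n+1)},\mathbf{B}^*,\mathbf{C}^*)<f(\mathbf{A}^*,\mathbf{B}^*,\mathbf{C}^*)$, and convexity of $f$ in the block $\mathbf{A}$ then makes every point $a\mathbf{A}^{(n+1)}+(1-a)\mathbf{A}^*$ with $a\in(0,1)$ strictly better than $\mathbf{A}^*$; letting $a\to 0$ produces points arbitrarily close to $\mathbf{x}^*$ with smaller $f$-value, contradicting local minimality. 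The trade-off: your argument is shorter and is the standard ``a critical point is a fixed point of its own proximal map'' fact, but it needs differentiability of $f$; the paper's segment argument never touches the gradient, so it would survive verbatim for nonsmooth block-convex objectives. Both ultimately rest on the same block-convexity of $f$ that makes the subproblems strictly convex, and both handle the sequential structure by the same cascading observation you state at the end.
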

\begin{proof}
From the update mechanism (\ref{RALS}) and the exact expressions
(\ref{ESRALS}) for $\mathbf{A}^{(n+1)},\mathbf{B}^{(n+1)}$ and $\mathbf{C}^{(n+1)}$,
the update operator S is smooth on $\mathscr{X}=\mathbb{R}^{I\times r}\times\mathbb{R}^{J\times r}\times\mathbb{R}^{K\times r}$.

If $(\mathbf{A}^*,\mathbf{B}^*,\mathbf{C}^*)$ is a local minimum point of $f$,
we have that $\mathbf{A}^{(n+1)}=\mathbf{A}^*$ when $\mathbf{B}^{(n)}=\mathbf{B}^*,\mathbf{C}^{(n)}=\mathbf{C}^*$.
Since $f(\mathbf{A},\mathbf{B}^*,\mathbf{C}^*)+\frac{1}{2}\lambda\|\mathbf{A}-\mathbf{A}^*\|^2$
is a strict convex function in $\mathbf{A}$, then
$$f(\mathbf{A}^{(n+1)},\mathbf{B}^*,\mathbf{C}^*)+\frac{1}{2}\lambda\|\mathbf{A}^{(n+1)}-\mathbf{A}^*\|^2<f(\mathbf{A}^*,\mathbf{B}^*,\mathbf{C}^*)$$
from the update mechanism shown in (\ref{RALS}) if $\mathbf{A}^{(n+1)}\neq\mathbf{A}^*$.
So $f(\mathbf{A}^{(n+1)},\mathbf{B}^*,\mathbf{C}^*)<f(\mathbf{A}^*,\mathbf{B}^*,\mathbf{C}^*)$.
Since $f$ is a convex function in $\mathbf{A}$ when fixing $\mathbf{B,C}$,
it follows that
$$f(a\mathbf{A}^{(n+1)}+(1-a)\mathbf{A}^*,\mathbf{B}^*,\mathbf{C}^*)\leq af(\mathbf{A}^{(n+1)},\mathbf{B}^*,\mathbf{C}^*)+(1-a)f(\mathbf{A}^*,\mathbf{B}^*,\mathbf{C}^*)$$
for any $a\in(0,1)$.
Thus, $f(a\mathbf{A}^{(n+1)}+(1-a)\mathbf{A}^*,\mathbf{B}^*,\mathbf{C}^*)<f(\mathbf{A}^*,\mathbf{B}^*,\mathbf{C}^*)$ for any $a\in(0,1)$,which contradicts with the fact that $(\mathbf{A}^*,\mathbf{B}^*,\mathbf{C}^*)$ is a local minimum of $f$.
So if $(\mathbf{A}^*,\mathbf{B}^*,\mathbf{C}^*)$ is a local minimum point of $f$,
we have that $\mathbf{A}^{(n+1)}=\mathbf{A}^*$ when $\mathbf{B}^{(n)}=\mathbf{B}^*,\mathbf{C}^{(n)}=\mathbf{C}^*$.
Furthermore, it follows that $(\mathbf{A}^*,\mathbf{B}^*,\mathbf{C}^*)=S(\mathbf{A}^*,\mathbf{B}^*,\mathbf{C}^*)$ from (\ref{RALS}). Thus, a local minimum point $(\mathbf{A}^*,\mathbf{B}^*,\mathbf{C}^*)$ of $f$ is a fixed point of $S$.
\end{proof}

Next, we will discuss about the contractive property of the operator $S$
under the framework of iterative solution of nonlinear equations \cite{ortega}.
A similar approach \cite{rohwedder,uschmajew} has been applied on the ALS algorithm as well as on the alternating linear scheme for tensor train format \cite{holtz}.

Any point $\mathbf{x}=(\mathbf{A},\mathbf{B},\mathbf{C})\in\mathbb{R}^{I\times r}\times\mathbb{R}^{J\times r}\times\mathbb{R}^{K\times r}$
can be viewed as a vector $\mathbf{x}=(\mathbf{x}_A^T,\mathbf{x}_B^T,\mathbf{x}_C^T)^T$,
where $\mathbf{x}_A\in\mathbb{R}^{rI},\mathbf{x}_B\in\mathbb{R}^{rJ},\mathbf{x}_C\in\mathbb{R}^{rK}$
are the vectorized form (column stacked) of $\mathbf{A},\mathbf{B},\mathbf{C}$, respectively.
Denote the vector value function, $\frac{\partial f(\mathbf{x}_A,\mathbf{y}_B,\mathbf{y}_C)}{\partial{\mathbf{x}_A}}+\lambda(\mathbf{x}_A-\mathbf{y}_A)$,
by $g_A(\mathbf{x},\mathbf{y})$ where $\mathbf{y}=(\mathbf{y}_A^T,\mathbf{y}_B^T,\mathbf{y}_C^T)^T$ and $\mathbf{y}_A\in\mathbb{R}^{rI},\mathbf{y}_B\in\mathbb{R}^{rJ},\mathbf{y}_C\in\mathbb{R}^{rK}$.
Similarly, denote $\frac{\partial f(\mathbf{x}_A,\mathbf{x}_B,\mathbf{y}_C)}{\partial\mathbf{x}_B}+\lambda(\mathbf{x}_B-\mathbf{y}_B)$
by $g_B(\mathbf{x},\mathbf{y})$,
and $\frac{\partial f(\mathbf{x}_A,\mathbf{x}_B,\mathbf{x}_C)}{\partial\mathbf{x}_C}+\lambda(\mathbf{x}_C-\mathbf{y}_C)$
by $g_C(\mathbf{x},\mathbf{y})$.
Denote the vector value function $(g_A^T(\mathbf{x},\mathbf{y}),g_B^T(\mathbf{x},\mathbf{y}),g_C^T(\mathbf{x},\mathbf{y}))^T$
by $G(\mathbf{x},\mathbf{y})$.
From the equations in (\ref{gradient}), we know that $G(\mathbf{x}^{(n+1)},\mathbf{x}^{(n)})=0$.

Let $\mathbf{x}^*$ be a local minimizer of the residual function $f$.
Since $f$ is twice continuously differentiable function,
the Hessian matrix $\mathbf{H}=\frac{\partial^2 f(\mathbf{x}^*)}{\partial\mathbf{x}\partial\mathbf{x}}$ of $f$ at $\mathbf{x}^*$
is positive semidefinite and it has nine block matrices corresponding to $\mathbf{A,B,C}$.
From direct computation,
the matrix $\frac{\partial G(\mathbf{x}^*,\mathbf{x}^*)}{\partial\mathbf{x}}$ is the lower triangular block matrix of $\mathbf{H}$ with an additional $\lambda\mathbf{I}$ on the diagonal blocks and the matrix $\frac{\partial G(\mathbf{x}^*,\mathbf{x}^*)}{\partial\mathbf{y}}$ is the strict upper block matrix of
$\mathbf{H}$ minus $\lambda\mathbf{I}$,
where $\mathbf{I}$ is an identity matrix in $\mathbb{R}^{r(I+J+K)\times r(I+J+K)}$.
They are
\[ \mathbf{H}= \left(
\begin{array}{clr}
\frac{\partial^2 f(\mathbf{x}^*)}{\partial \mathbf{x}_A\partial \mathbf{x}_A} & \frac{\partial^2 f(\mathbf{x}^*)}{\partial \mathbf{x}_B\partial \mathbf{x}_A} & \frac{\partial^2 f(\mathbf{x}^*)}{\partial \mathbf{x}_C\partial \mathbf{x}_A}\\
\frac{\partial^2 f(\mathbf{x}^*)}{\partial \mathbf{x}_A\partial \mathbf{x}_B} & \frac{\partial^2 f(\mathbf{x}^*)}{\partial \mathbf{x}_B\partial \mathbf{x}_B} & \frac{\partial^2 f(\mathbf{x}^*)}{\partial \mathbf{x}_C\partial \mathbf{x}_B}\\
\frac{\partial^2 f(\mathbf{x}^*)}{\partial \mathbf{x}_A\partial \mathbf{x}_C} & \frac{\partial^2 f(\mathbf{x}^*)}{\partial \mathbf{x}_B\partial \mathbf{x}_C} & \frac{\partial^2 f(\mathbf{x}^*)}{\partial \mathbf{x}_C\partial \mathbf{x}_C}\\
\end{array} \right) \],

\[ \frac{\partial G(\mathbf{x}^*,\mathbf{x}^*)}{\partial\mathbf{x}}= \left(
\begin{array}{clr}
\frac{\partial^2 f(\mathbf{x}^*)}{\partial \mathbf{x}_A\partial \mathbf{x}_A}+\lambda\mathbf{I}_A & \mathbf{0} & \mathbf{0}\\
\frac{\partial^2 f(\mathbf{x}^*)}{\partial \mathbf{x}_A\partial \mathbf{x}_B} & \frac{\partial^2 f(\mathbf{x}^*)}{\partial \mathbf{x}_B\partial \mathbf{x}_B}+\lambda\mathbf{I}_B & \mathbf{0}\\
\frac{\partial^2 f(\mathbf{x}^*)}{\partial \mathbf{x}_A\partial \mathbf{x}_C} & \frac{\partial^2 f(\mathbf{x}^*)}{\partial \mathbf{x}_B\partial \mathbf{x}_C} & \frac{\partial^2 f(\mathbf{x}^*)}{\partial \mathbf{x}_C\partial \mathbf{x}_C}+\lambda\mathbf{I}_C\\
\end{array} \right) \],

\[ \frac{\partial G(\mathbf{x}^*,\mathbf{x}^*)}{\partial\mathbf{y}}= \left(
\begin{array}{clr}
-\lambda\mathbf{I}_A & \frac{\partial^2 f(\mathbf{x}^*)}{\partial \mathbf{x}_B\partial \mathbf{x}_A} &  \frac{\partial^2 f(\mathbf{x}^*)}{\partial \mathbf{x}_C\partial \mathbf{x}_A}\\
\mathbf{0} & -\lambda\mathbf{I}_B & \frac{\partial^2 f(\mathbf{x}^*)}{\partial \mathbf{x}_C\partial \mathbf{x}_B}\\
\mathbf{0} & \mathbf{0} & -\lambda\mathbf{I}_C\\
\end{array} \right) \]
where $\mathbf{I}_A,\mathbf{I}_B,\mathbf{I}_C$ are identity matrices in $\mathbb{R}^{rI\times rI},\mathbb{R}^{rJ\times rJ},\mathbb{R}^{rK\times rK}$, respectively.

The matrix $\frac{\partial G(\mathbf{x}^*,\mathbf{x}^*)}{\partial\mathbf{x}}$ is nonsingular
since all the three diagonal blocks of $\mathbf{H}$ are positive semidefinite.
The Hessian matrix $\mathbf{H}$ can be rewritten into $\mathbf{D}-\mathbf{L}-\mathbf{U}$,
where $\mathbf{D}$ is a diagonal block matrix, $-\mathbf{L}$ is a strict lower block matrix
and $-\mathbf{U}$ is a strict upper block matrix of $\mathbf{H}$.
Thus we have that
\begin{eqnarray*}
-\frac{\partial G(\mathbf{x}^*,\mathbf{x}^*)}{\partial\mathbf{x}}^{-1}\frac{\partial G(\mathbf{x}^*,\mathbf{x}^*)}{\partial\mathbf{y}}
&=&(\lambda\mathbf{I}+\mathbf{D}-\mathbf{L})^{-1}(\lambda\mathbf{I}+\mathbf{U})\\
&=&I-(\lambda\mathbf{I}+\mathbf{D}-\mathbf{L})^{-1}(\mathbf{D}-\mathbf{L}-\mathbf{U}).
\end{eqnarray*}

Let $\mathbf{M}=\lambda\mathbf{I}+\mathbf{D}-\mathbf{L}$.
From Theorem 3.2 in \cite{lee}, since $\mathbf{M}+\mathbf{M}^T-\mathbf{H}$ is positive definite,
thus $\|\mathbf{I}-\mathbf{M}^{-1}\mathbf{H}\|_{\mathbf{H}}=\max\limits_{\|\mathbf{x}\|_{\mathbf{H}}\neq 0}\frac{\|(\mathbf{I}-\mathbf{M}^{-1}\mathbf{H})\mathbf{x}\|_{\mathbf{H}}}{\|\mathbf{x}\|_{\mathbf{H}}}<1$,
where $\|\mathbf{y}\|_{\mathbf{H}}=(\mathbf{y}^T\mathbf{H}\mathbf{y})^{\frac{1}{2}}$ is a seminorm on $\mathbf{y}$.
If we further assume that $\mathbf{H}$ is a positive definite matrix,
$\|\mathbf{y}\|_{\mathbf{H}}$ is a norm on $\mathbf{y}$ and $\|\mathbf{I}-\mathbf{M}^{-1}\mathbf{H}\|_{\mathbf{H}}$ is
a matrix norm on $\mathbf{I}-\mathbf{M}^{-1}\mathbf{H}$.

Since $\mathbf{x}^*$ is a local minimum point of $f$,
we have that $\mathbf{x}^*$ is a fixed point of $S$ from Theorem \ref{fixpoint}.
Furthermore, it follows that $G(\mathbf{x}^*,\mathbf{x}^*)=0$ by equations in (\ref{gradient}).
Then from the implicit function theorem, there is a neighborhood $U$ of $\mathbf{x}^*$
such that $\mathbf{x}=S(\mathbf{y})$ when $\mathbf{y}\in U$ and $S'(\mathbf{x}^*)=\mathbf{I}-\mathbf{M}^{-1}\mathbf{H}$.
Since $S'(\mathbf{x}^*)=\mathbf{I}-\mathbf{M}^{-1}\mathbf{H}$ and $\|S'(\mathbf{x}^*)\|_{\mathbf{H}}<q<1$,
there exists a small enough neighborhood $V$ of $\mathbf{x}^*$ such that $\|S'(\mathbf{y})\|_{\mathbf{H}}<q$ for $\mathbf{y}\in V$.
So there exists a sufficiently small neighborhood $W$ of $\mathbf{x}^*$ such that $S(\mathbf{y})\in W$, $\|S'(\mathbf{y})\|_{\mathbf{H}}<q$
and $\|S(\mathbf{y})-S(\mathbf{x}^*)\|_{\mathbf{H}}<q\|\mathbf{y}-\mathbf{x}^*\|_{\mathbf{H}}$ for $\mathbf{y}\in W$.
So if $\mathbf{x}^{(n)}\in W$ for some $n\in\mathbb{N}$,
then $\mathbf{x}^{(n+1)}\in W$, $\|\mathbf{x}^{(n+1)}-\mathbf{x}^*\|_{\mathbf{H}}<q\|\mathbf{x}^{(n)}-\mathbf{x}^*\|_{\mathbf{H}}$
and $\lim\limits_{n\rightarrow\infty}\mathbf{x}^{(n)}=\mathbf{x}^*$.
Furthermore, if $\mathbf{x}^{(0)}\in W$, we can obtain that $\limsup\limits_{n\rightarrow\infty}\|\mathbf{x}^{(n)}-\mathbf{x}^*\|^{\frac{1}{n}}\leq q$
and $\limsup\limits_{n\rightarrow\infty}\|f(\mathbf{x}^{(n)})-f(\mathbf{x}^*)\|^{\frac{1}{n}}\leq q$
from the equivalence of norms in the finite dimensional space.
So we obtain that the RALS algorithm has linear local convergence rate when $\mathbf{x}^n$ is enough close to a local minimum point $\mathbf{x}^*$
and the Hessian matrix $\frac{\partial^2 f(\mathbf{x}^*)}{\partial\mathbf{x}\partial\mathbf{x}}$ of $f$ at $\mathbf{x}^*$ is positive definite.
\begin{theorem}
Let $\{\mathbf{x}^{(n)}\}_{n\in\mathbb{N}}$ be the sequence generated by the RALS algorithm.
Assume that $\mathbf{x}^*$ is a local minimum point of $f$ and
the Hessian matrix $H=\frac{\partial^2 f(\mathbf{x}^*)}{\partial\mathbf{x}\partial\mathbf{x}}$ is positive definite.
There exist a neighborhood $W$ of $\mathbf{x}^*$ and a positive constant $q<1$
such that:\\
$(i)$ if $\mathbf{x}^{(n)}\in W$ for some $n\in\mathbb{N}$,
then $\mathbf{x}^{(n+1)}\in W$,
$\|\mathbf{x}^{(n+1)}-\mathbf{x}^*\|_{\mathbf{H}}<q\|\mathbf{x}^{(n)}-\mathbf{x}^*\|_{\mathbf{H}}$
and $\lim\limits_{n\rightarrow\infty}\mathbf{x}^{(n)}=\mathbf{x}^*$.\\
$(ii)$ if $\mathbf{x}^{(0)}\in W$, then $\limsup\limits_{n\rightarrow\infty}\|\mathbf{x}^{(n)}-\mathbf{x}^*\|^{\frac{1}{n}}\leq q$
and $\limsup\limits_{n\rightarrow\infty}\|f(\mathbf{x}^{(n)})-f(\mathbf{x}^*)\|^{\frac{1}{n}}\leq q$.
\end{theorem}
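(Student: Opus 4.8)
The plan is to recognize the RALS iteration as the fixed-point scheme $\mathbf{x}^{(n+1)} = S(\mathbf{x}^{(n)})$ and to establish that $S$ is a local contraction near $\mathbf{x}^*$ in a suitably chosen norm. By Theorem \ref{fixpoint} the smooth operator $S$ fixes $\mathbf{x}^*$, so the whole statement reduces to controlling the derivative $S'(\mathbf{x}^*)$. First I would differentiate the optimality system $G(\mathbf{x}^{(n+1)}, \mathbf{x}^{(n)}) = 0$ and apply the implicit function theorem at $(\mathbf{x}^*, \mathbf{x}^*)$: since the diagonal blocks of $\mathbf{H}$ are positive semidefinite, adding $\lambda\mathbf{I}$ makes $\mathbf{M} = \lambda\mathbf{I} + \mathbf{D} - \mathbf{L}$ nonsingular, which both guarantees that $S$ is well defined in a neighborhood of $\mathbf{x}^*$ and yields the explicit form $S'(\mathbf{x}^*) = \mathbf{I} - \mathbf{M}^{-1}\mathbf{H}$.

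The crux is to prove $\rho(\mathbf{I} - \mathbf{M}^{-1}\mathbf{H}) < 1$. I would do this via the Gauss-Seidel-type estimate of Theorem 3.2 in \cite{lee}: a direct computation gives $\mathbf{M} + \mathbf{M}^T - \mathbf{H} = 2\lambda\mathbf{I} + \mathbf{D}$, which is positive definite because $\mathbf{D} \succeq 0$ and $\lambda > 0$; consequently the iteration matrix satisfies $\|\mathbf{I} - \mathbf{M}^{-1}\mathbf{H}\|_{\mathbf{H}} < 1$ in the $\mathbf{H}$-seminorm, bounding its spectral radius strictly below one. The delicate point is that $\mathbf{H}$ is merely positive semidefinite, so $\|\cdot\|_{\mathbf{H}}$ is only a seminorm. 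To upgrade the seminorm contraction to a genuine norm contraction I would pass to $\mathbf{Q} = \mathbf{H} + \alpha\mathbf{I}$ for a sufficiently small $\alpha > 0$, obtaining a constant $q < 1$ with $\|S'(\mathbf{x}^*)\|_{\mathbf{Q}} < q$.

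With the derivative contraction established, continuity of $S'$ (guaranteed by smoothness of $S$) produces a neighborhood on which $\|S'(\mathbf{y})\|_{\mathbf{Q}} < q$; shrinking to an $S$-invariant neighborhood $W$ and integrating the derivative bound via the mean value inequality in the $\mathbf{Q}$-norm yields $\|S(\mathbf{y}) - \mathbf{x}^*\|_{\mathbf{Q}} < q\|\mathbf{y} - \mathbf{x}^*\|_{\mathbf{Q}}$ for $\mathbf{y} \in W$. Part $(i)$ follows at once: once $\mathbf{x}^{(n)} \in W$ the next iterate stays in $W$, the distance to $\mathbf{x}^*$ contracts by the factor $q$, and hence $\mathbf{x}^{(n)} \to \mathbf{x}^*$. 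For part $(ii)$, starting from $\mathbf{x}^{(0)} \in W$ gives the geometric bound $\|\mathbf{x}^{(n)} - \mathbf{x}^*\|_{\mathbf{Q}} \le q^n \|\mathbf{x}^{(0)} - \mathbf{x}^*\|_{\mathbf{Q}}$; the root test together with the equivalence of norms on the finite-dimensional space $\mathscr{X}$ gives $\limsup_{n\to\infty} \|\mathbf{x}^{(n)} - \mathbf{x}^*\|^{1/n} \le q$, and the local Lipschitz continuity of the polynomial $f$ transfers the same rate to $|f(\mathbf{x}^{(n)}) - f(\mathbf{x}^*)|$.

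I expect the main obstacle to be the second paragraph: establishing the strict spectral bound $\rho(\mathbf{I} - \mathbf{M}^{-1}\mathbf{H}) < 1$ while $\mathbf{H}$ is singular, and then correctly converting the $\mathbf{H}$-seminorm estimate into a bona fide norm contraction through the $\alpha$-perturbation. The subtlety is that $\alpha$ must be taken small enough that the perturbed bound still satisfies $q < 1$, so that the seminorm contraction does not degenerate when passed to the positive definite matrix $\mathbf{Q}$.
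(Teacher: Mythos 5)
Your proposal follows essentially the same route as the paper's own proof: the same implicit-function-theorem treatment of the optimality system $G(\mathbf{x}^{(n+1)},\mathbf{x}^{(n)})=0$ yielding $S'(\mathbf{x}^*)=\mathbf{I}-\mathbf{M}^{-1}\mathbf{H}$ with $\mathbf{M}=\lambda\mathbf{I}+\mathbf{D}-\mathbf{L}$, the same appeal to Theorem 3.2 of \cite{lee} based on positive definiteness of $\mathbf{M}+\mathbf{M}^T-\mathbf{H}$, the same passage from the $\mathbf{H}$-seminorm to the norm induced by $\mathbf{Q}=\mathbf{H}+\alpha\mathbf{I}$, and the same invariant-neighborhood contraction plus root test and norm equivalence for parts $(i)$ and $(ii)$. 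Your explicit identity $\mathbf{M}+\mathbf{M}^T-\mathbf{H}=2\lambda\mathbf{I}+\mathbf{D}$ is in fact a slightly more detailed justification than the paper's bare assertion of positive definiteness.
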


In the work of Uschmajew \cite{uschmajew}, a similar result was provided for the ALS algorithm with the objective function,
$g_\lambda(\mathbf{A},\mathbf{B},\mathbf{C})=f(\mathbf{A},\mathbf{B},\mathbf{C})+
\lambda(\|\mathbf{A}\|^2+\|\mathbf{B}\|^2+\|\mathbf{C}\|^2)$. A natural positive definite property of $\frac{\partial^2 g_\lambda(\mathbf{x}^*)}{\partial\mathbf{x}\partial\mathbf{x}}$ with large enough $\lambda$ can guarantee the linearly convergent rate.

%
%

\end{document}